\newtheorem{theorem}{Theorem}[section]
\newtheorem{thm}[theorem]{Theorem}
\newtheorem{definition}[theorem]{Definition}
\newtheorem{defn}[theorem]{Definition}
\newtheorem{prop}[theorem]{Proposition}
\newtheorem{proposition}[theorem]{Proposition}
\newtheorem{cor}[theorem]{Corollary}
\newtheorem{lemma}[theorem]{Lemma}
\newtheorem{remark}[theorem]{Remark}
\newtheorem{rem}[theorem]{Remark}
\newtheorem{example}[theorem]{Example}
\newcommand{\cat}{\mathcal}
\newcommand{\R}{\mathbb R}
\newcommand{\Q}{\mathbb Q}
\newcommand{\Z}{\mathbb Z}
\newcommand{\N}{\mathbb N}
\newcommand{\C}{\mathbb C}
\newcommand{\T}{\mathbb T}
\newcommand{\CP}{\mathbb P}
\newcommand{\HH}{\mathbb H}
\newcommand{\ZZ}{\mathbb Z}
\newcommand{\QQ}{\mathbb Q}
\newcommand{\fk}{{\mathfrak{k}}}
\newcommand{\Xx}{{\cat X}}
\newcommand{\om}{{\omega}}
\newcommand{\vr}{{\varphi}}
\newcommand{\ga}{\gamma}
\def\hga{{\hat\gamma}}
\def\bga{{\bar\gamma}}
\def\D{{\mathfrak{D}}}
\def\maslov{{\mu_{\text{Maslov}}}}
\newcommand{\cp}{{{\C P}\,\!}}
\newcommand{\rp}{{{\R\CP}\,\!}}
\newcommand{\balpha}{{\bar{\alpha}}}
\newcommand{\teta}{{\tilde{\eta}}}
\newcommand{\txi}{{\tilde{\xi}}}
\newcommand{\tnu}{{\tilde{\nu}}}
\newcommand{\id}{\mathit{id}}
\DeclareMathOperator{\Sp}{Sp}
\DeclareMathOperator{\conv}{conv}
\DeclareMathOperator{\Aff}{Aff}
\def\om{{\omega}}
\newcommand{\SU}{\text{SU}}
\def\rs{{\mu_\text{RS}}}
\def\vr{{\varphi}}
\begin{document}
\title[invariants of non-simply connected toric contact manifolds]
{On contact invariants of non-simply connected Gorenstein toric contact manifolds}

\author[M.~Abreu]{Miguel Abreu}
\address{Center for Mathematical Analysis, Geometry and Dynamical Systems,
Instituto Superior T\'ecnico, Universidade de Lisboa, 
Av. Rovisco Pais, 1049-001 Lisboa, Portugal}
\email{mabreu@math.tecnico.ulisboa.pt, macarini@math.tecnico.ulisboa.pt}

\author[L.~Macarini]{Leonardo Macarini}
 
\author[M.~Moreira]{Miguel Moreira}
\address {ETH Z\"urich, Department of Mathematics. R\"amistrasse 101, 8092 Z\"urich, Switzerland.}
\email{miguel.moreira@math.ethz.ch}

\thanks{Partially funded by FCT/Portugal through UID/MAT/04459/2020. MA and MM were  also
funded through project PTDC/MAT-GEO/1608/2014. MA and LM were also funded by CNPq/Brazil.  
The present work started as part of MA and LM activities within BREUDS, a research partnership 
between European and Brazilian research groups in dynamical systems, supported by an FP7 
International Research Staff Exchange Scheme (IRSES) grant of the European Union.
MM received funding from the European Research Council (ERC) under the European Unions Horizon 
2020 research and innovation programme (ERC-2017-AdG-786580-MACI).}

\begin{abstract}
The first two authors showed in~\cite{AM1} how the Conley-Zehnder index of any contractible periodic Reeb
orbit of a non-degenerate toric contact form on a good toric contact manifold with zero first Chern class, i.e.
a Gorenstein toric contact manifold, can be explicitly computed using moment map data. In this paper we show
that the same explicit method can be used to compute Conley-Zehnder indices of non-contractible periodic
Reeb orbits. Under appropriate conditions, the (finite) number of such orbits in a given free homotopy class and
with a given index is a contact invariant of the underlying contact manifold. We compute these invariants for two 
sets of examples that satisfy these conditions: $5$-dimensional contact manifolds that arise as unit cosphere bundles 
of $3$-dimensional lens spaces, and $2n+1$-dimensional Gorenstein contact lens spaces. As applications, we 
will see that these invariants can be used to show that diffeomorphic lens spaces might not be contactomorphic 
and that there are homotopy classes of diffeomorphisms of some lens spaces that do not contain any contactomorphism. 
Following a suggestion by one referee, we will also see that this type of applications can be proved alternatively by 
looking at the total Chern class of these canonical contact structures on lens spaces.
\end{abstract}

\keywords{toric contact manifolds; toric symplectic cones; equivariant symplectic homology; contact invariants; 
Gorenstein toric isolated singularities; contact lens spaces}

\subjclass[2010]{53D40, 53D42, 53D20, 53D35}

\maketitle

\section{Introduction}
\label{s:intro}

The first two authors showed in~\cite{AM1} how the Conley-Zehnder index of any contractible periodic Reeb
orbit of a non-degenerate toric contact form on a good toric contact manifold with zero first Chern class, i.e.
a Gorenstein toric contact manifold, can be explicitly computed using moment map data. In this paper we show
that the same explicit method can be used to compute Conley-Zehnder indices of non-contractible periodic
Reeb orbits, discuss several examples and give applications.

Recall that the contact homology degree, or Symplectic Field Theory (SFT) degree, of a non-degenerate
closed Reeb orbit is equal to the Conley-Zehnder index plus $n-2$, where throughout this paper contact 
manifolds have dimension $2n+1$. As shown in~\cite{AM1,AM2}, this degree is always even for the
closed Reeb orbits of any non-degenerate toric contact form. Hence, given a non-degenerate toric contact form 
$\alpha$ on a Gorenstein toric contact manifold $(M, \xi)$, with corresponding Reeb vector field $R_\alpha$,
each \emph{contact Betti number} $cb_j (M,\alpha)$, $j\in\Z$, defined by
\[
cb_j (M, \alpha) = \text{number of closed $R_\alpha$-orbits with contact homology degree $j$,}
\]
should be a contact invariant of $(M,\xi)$, the rank of its degree $j$ cylindrical contact homology,
Unfortunately, and despite recent foundational developments (e.g.~\cite{P1,P2}), cylindrical contact 
homology has not been proved to be a well defined invariant in the presence of contractible closed
Reeb orbits, even in this restricted context of Gorenstein toric contact manifolds. 

There are however at least two particular contexts that allow us to conclude that the contact Betti 
numbers are indeed contact invariants:
\begin{itemize}
\item[(i)] Gorenstein toric contact manifolds that have crepant (i.e. with zero first Chern class) toric 
symplectic fillings.
\item[(ii)] Gorenstein toric contact manifolds that have a non-degenerate toric contact form with all 
of its closed contractible Reeb orbits having Conley-Zehnder index strictly greater than $3-n$, i.e. 
SFT degree strictly greater than $1$.
\end{itemize}
Indeed, in both of these contexts we can use positive equivariant symplectic homology to conclude 
that the contact Betti numbers are contact invariants, which will be denoted by $cb_j (M, \xi)$ or just
$cb_j (M)$ when the contact structure is clear from the context. 
For (i), one considers the positive equivariant symplectic homology of the filling and recent work of 
McLean and Ritter~\cite{MR}, which uses previous work of Kwon and van Koert~\cite{KvK} 
(see~\cite[Remark 1.4]{AM2} and~\cite[section 2]{AMGK}). For (ii), one considers the
positive equivariant symplectic homology of the symplectization and the work of Bourgeois and 
Oancea~\cite[section 4.1.2]{BO17}. In this case, we can also consider the decomposition
of positive equivariant symplectic homology induced by homotopy classes $[\gamma]$ in the (cyclic) fundamental 
group of these Gorenstein toric manifolds to get possibly finer invariants of the their (co-oriented) contact
structures:
\[
cb_j ^{[\gamma]} (M, \xi) = \text{number of closed $R_\alpha$-orbits in $[\gamma]$
with contact homology degree $j$.}
\]

The examples discussed in this paper are all within context (ii) above, with some being also within
context (i).

The first set of examples consists of the $5$-dimensional contact manifolds that arise as unit cosphere
bundles of $3$-dimensional lens spaces. We will show that they are Gorenstein toric contact manifolds,
fit within context (ii) and compute their contact Betti numbers, including the decomposition induced by
homotopy classes in the fundamental group. Note that since any $5$-dimensional Gorenstein
toric contact manifold has a crepant toric symplectic filling, these examples also fit within context (i).

The second set of examples consists of $2n+1$-dimensional lens spaces 
$$L^{2n+1}_p (\ell_0, \ell _1, \ldots, \ell_n)$$
with canonical contact structure, arising as the quotient of the standard contact sphere $S^{2n+1}\subset\C^{n+1}$ 
by the $\Z_p$-action with weights $\ell_0, \ell _1, \ldots, \ell_n\in\Z$. Such an action is free when the weights are 
coprime with $p$ and the resulting smooth contact lens space has zero first Chern class if and only if the sum of 
the weights is zero mod $p$. Being finite quotients of the standard contact sphere, all these Gorenstein toric contact 
lens spaces can easily be seen to fit context (ii), although most of them do not fit context (i). We will give an explicit 
method to compute their contact Betti numbers and their decomposition by homotopy classes of the fundamental group 
(see Proposition~\ref{prop:lensBetti}), and illustrate how these invariants can be used to prove the following type of 
results.

\begin{prop} \label{prop:ineq}
The diffeomorphic lens spaces 
\[
L^{13}_5 (1, 1,1,1,2,-2,1)  \quad\text{and}\quad L^{13}_5 (1, -1,-1,-1,-2,-2,1)
\]
have inequivalent canonical contact structures, both with zero first Chern class.
\end{prop}

\begin{prop} \label{prop:auto}
The lens space $L^{15}_5 (1,1,1,2,-2,-2,-2,1)$ has an homotopy class of orientation preserving
diffeomorphisms that does not contain any contactomorphism of its canonical contact structure. 
In fact, it has an orientation preserving diffeomorphism that acts by multiplication by $2\in (\Z_5)^\ast$ 
on its fundamental group $\Z_5$, while any contactomorphism of its canonical contact structure
acts by multiplication by $\pm 1\in (\Z_5)^\ast$.
\end{prop} 

\begin{remark} \label{rmk:referee}
Following a very relevant suggestion by one referee, it turns out that this type of results can also be proved
by looking at the total Chern class of these canonical contact structures on lens spaces. In fact, we have
that  
$$H^{2j} (L^{2n+1}_p (\ell_0, \ell _1, \ldots, \ell_n); \Z) \cong \Z_p\,, \ \text{for} \ j=1,\ldots,n\,,$$ 
and the Chern classes of the canonical contact structure are given by (cf. Proposition~\ref{prop:totalChern}
in appendix~\ref{appendix})
\[
c_j = \sigma_j \!\!\!\!  \mod p\,, j=1,\ldots n\,,
\]
where $\sigma_j =$ $j$-th elementary symmetric polynomial of $\ell_0, \ell _1, \ldots, \ell_n$.
For example,
\[
c_1 = \sum_{k=0}^{n} \ell_k \mod p \quad\text{and}\quad c_n = \sum_{k=0}^{n+1} \frac{\ell_0 \cdot \cdots \cdot \ell_n}{\ell_k} \mod p\,.
\]
In particular, this gives
\[
c_6 (L^{13}_5 (1, 1,1,1,2,-2,1))  = 1 + 1 + 1 + 1 -2 + 2 + 1 = 5 = 0 \mod 5
\]
and
\[
c_6 (L^{13}_5 (1, -1,-1,-1,-2,-2,1)) = 1 -1 -1 -1 + 2 + 2 + 1 = 3 \neq 0 \mod 5\,,
\]
which implies Proposition~\ref{prop:ineq}. Regarding Proposition~\ref{prop:auto}, we have that
\[
c_7 (L^{15}_5 (1,1,1,2,-2,-2,-2,1)) = -1 -1 -1 + 2 -2 -2 -2 -1 = 2 \neq 0 \mod 5\,.
\]
Moreover, if a diffeomorphism of a lens space acts by multiplication by $k \in \Z_k^\ast$ on its fundamental group, 
then it acts by multiplication by $k^j \in \Z_k^\ast$ on its degree $2j$ cohomology. Hence, the orientation 
preserving diffeomorphism considered in Proposition~\ref{prop:auto} does not fix $c_7 (L^{15}_5 (1,1,1,2,-2,-2,-2,1))$,
since it acts on it by multiplication by $2^7 = 128 \equiv 3 \mod 5$, which immediately implies that there is no 
contactomorphism in its homotopy class.

Although there are lens spaces with canonical contact structure having total Chern class equal to zero, 
they do not provide relevant examples for the type of results considered in Propositions~\ref{prop:ineq} 
and~\ref{prop:auto}. In fact, as we will see in appendix~\ref{appendix}, at least for prime $p$ there are no examples 
of lens spaces where the  type of results considered in Propositions~\ref{prop:ineq} and~\ref{prop:auto} do not follow 
from total Chern class considerations, the reason being that the canonical contact structure of a lens space 
$L^{2n+1}_p (\ell_0, \ell _1, \ldots, \ell_n)$  is completely determined by the diffeomorphism type of the lens space 
and the total Chern class of the contact structure.

\end{remark} 

The paper is organized as follows.
\begin{itemize}
\item In section~\ref{s:Gorenstein} we recall fundamental facts about Gorenstein toric contact manifolds, including 
some (perhaps new) observations relating their fundamental groups with combinatorial properties of their
toric diagrams. We will also present some examples, describing in detail the ones mentioned above.
\item Section~\ref{s:trivialization} is the main technical section of the paper, where we show that the explicit method 
developed in~\cite{AM1}, to compute the Conley-Zehnder index of any contractible periodic Reeb orbit of a non-degenerate 
toric contact form on a Gorenstein toric contact manifold, can also be used for non-contractible periodic Reeb orbits.
\item In section~\ref{s:examples1} we compute the contact Betti numbers of unit cosphere bundles of $3$-dimensional 
lens spaces, including their decomposition induced by homotopy classes in the fundamental group.
\item In section~\ref{s:examples2}, we give an explicit method to compute the contact Betti numbers of
Gorenstein contact lens spaces and their decomposition by homotopy class of the fundamental group 
(Proposition~\ref{prop:lensBetti}), illustrate how it can be explicitly used in some examples, including Gorenstein 
prequantizations of $\cp^n$ (Example~\ref{ex:preq}), and prove Propositions~\ref{prop:ineq} and~\ref{prop:auto}.
\item Finally, appendix~\ref{appendix} is devoted to considerations regarding the total Chern class of the canonical contact
structure of a lens space and results of the type considered in Propositions~\ref{prop:ineq} and~\ref{prop:auto}.
\end{itemize}

\section{Gorenstein toric contact manifolds}
\label{s:Gorenstein}

In this section we recall fundamental facts about Gorenstein toric contact manifolds that will be needed in the paper,
including some (perhaps new) observations relating their fundamental groups with combinatorial properties of their
toric diagrams. We will also present some examples, describing in detail the ones that were already mentioned in 
the Introduction, i.e. $5$-dimensional contact manifolds that arise as unit cosphere bundles of $3$-dimensional lens 
spaces and $2n+1$-dimensional lens spaces.

\subsection{Toric diagrams}
\label{ss:diagrams}

In this subsection we closely follow the presentation in~\cite{AM2} to recall the $1$-$1$ correspondence between 
Gorenstein toric contact manifolds, i.e. good toric contact manifolds (in the sense of Lerman~\cite{Le1}) with 
zero first Chern class, and toric diagrams (defined below).

Via symplectization, there is a $1$-$1$ correspondence between co-oriented contact manifolds
and symplectic cones, i.e. triples $(W,\om,X)$ where $(W,\om)$ is a connected symplectic manifold
and $X$ is a vector field, the Liouville vector field, generating a proper $\R$-action
$\rho_t:W\to W$, $t\in\R$, such that $\rho_t^\ast (\om) = e^{t} \om$. A closed symplectic cone is a 
symplectic cone $(W,\om,X)$ for which the corresponding contact manifold $M = W/\R$ is closed.

A toric contact manifold is a contact manifold of dimension $2n+1$ equipped with an effective Hamiltonian
action of the standard torus of dimension $n+1$: $\T^{n+1} = \R^{n+1} / 2\pi\Z^{n+1}$. Also via symplectization,
toric contact manifolds are in $1$-$1$ correspondence with toric symplectic cones, i.e. symplectic cones
$(W,\om,X)$ of dimension $2(n+1)$ equipped with an effective $X$-preserving Hamiltonian $\T^{n+1}$-action,
with moment map $\mu : W \to \R^{n+1}$ such that $\mu (\rho_t (w)) = e^{t} \mu (w)$, for all $w\in W$ and $t\in\R$.
Its moment cone is defined to be $C:= \mu(W) \cup \{ 0\} \subset \R^{n+1}$.

A toric contact manifold is {\it good} if its toric symplectic cone has a moment cone with the following properties.
\begin{definition} \label{def:good}
A cone $C\subset\R^{n+1}$ is \emph{good} if it is strictly convex and there exists a minimal set 
of primitive vectors $\nu_1, \ldots, \nu_d \in \Z^{n+1}$, with 
$d\geq n+1$, such that
\begin{itemize}
\item[(i)] $C = \bigcap_{j=1}^d \{x\in\R^{n+1}\mid 
\ell_j (x) := \langle x, \nu_j \rangle \geq 0\}$.
\item[(ii)] Any codimension-$k$ face of $C$, $1\leq k\leq n$, 
is the intersection of exactly $k$ facets whose set of normals can be 
completed to an integral basis of $\Z^{n+1}$.
\end{itemize}
The primitive vectors $\nu_1, \ldots, \nu_d \in \Z^{n+1}$ are called the defining normals of the good cone $C\subset\R^{n+1}$.
\end{definition}
The analogue for good toric contact manifolds of Delzant's classification theorem for closed toric
symplectic manifolds is the following result (see~\cite{Le1}).
\begin{theorem} \label{thm:good}
For each good cone $C\subset\R^{n+1}$ there exists a unique closed toric symplectic cone
$(W_C, \om_C, X_C, \mu_C)$ with moment cone $C$.
\end{theorem}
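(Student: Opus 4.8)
The plan is to prove existence by a Delzant-type symplectic reduction and uniqueness by combining action-angle coordinates over the interior of $C$ with the standard local models forced by condition (ii) of Definition~\ref{def:good}.

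For existence, I would start from the linear map $\beta\colon\R^d\to\R^{n+1}$ sending the $j$-th standard basis vector to $\nu_j$. Since $C$ is strictly convex and full-dimensional, its defining normals span $\R^{n+1}$, so $\beta$ is surjective; because the $\nu_j$ are integral, $\fk:=\ker\beta$ is a rational subspace and $K:=\exp(\fk)\subset\T^d=\R^d/2\pi\Z^d$ is a closed subtorus of dimension $d-n-1$. On $\C^d\setminus\{0\}$ — the standard symplectic cone whose contact manifold is $S^{2d-1}$ — the linear $\T^d$-action commutes with the radial Liouville scaling, so its restriction to $K$ is Hamiltonian and preserves the Liouville field, with moment map $\mu_K$ obtained by composing the standard moment map $z\mapsto(\tfrac12|z_1|^2,\dots,\tfrac12|z_d|^2)$ with the dual of the inclusion $\fk\hookrightarrow\R^d$. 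I would then set
\[
W_C := \bigl(\mu_K^{-1}(0)\setminus\{0\}\bigr)/K,
\]
reducing at the level $0$ precisely because that level set is invariant under scaling, so the Liouville field descends to a field $X_C$ and $\om_0$ descends to a cone symplectic form $\om_C$. A dimension count gives $\dim W_C=2(n+1)$, the residual torus $\T^{n+1}=\T^d/K$ acts effectively and in Hamiltonian fashion with moment map $\mu_C$ induced by $\beta$, and the coordinate loci $\{z_j=0\}$ map onto the facets $\{\ell_j=0\}$ of $C$, so that the resulting moment cone $\mu_C(W_C)\cup\{0\}$ is exactly $C$.

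The crucial point here is that condition (ii) is precisely what guarantees that $K$ acts freely on $\mu_K^{-1}(0)\setminus\{0\}$: along the stratum where a prescribed set of coordinates $z_j$ vanish, the relevant normals extend to an integral basis of $\Z^{n+1}$, which forces the corresponding isotropy in $K$ to be trivial and, via $\beta$, identifies the residual action with that of the standard torus $\T^{n+1}$. Hence the quotient is a genuine smooth manifold rather than an orbifold. Strict convexity of $C$ then ensures that the associated contact manifold $M=W_C/\R$ is closed, so $(W_C,\om_C,X_C,\mu_C)$ is the desired closed toric symplectic cone.

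For uniqueness, I would show that the moment cone determines the cone up to equivariant symplectomorphism intertwining the Liouville fields. Over $\mathrm{int}(C)$, where $\T^{n+1}$ acts freely, action-angle coordinates identify $\mu_C^{-1}(\mathrm{int}\,C)$ canonically with $\mathrm{int}(C)\times\T^{n+1}$ carrying the standard form $\sum dx_i\wedge d\theta_i$ and Liouville field $\sum x_i\partial_{x_i}$, a model depending only on $C$. Near each face, condition (ii) provides a standard linear normal form for the symplectic cone, again determined solely by the facet normals meeting that face. Given two toric symplectic cones with moment cone $C$, these canonical identifications produce an equivariant diffeomorphism compatible with both structures up to isotopy, and a $\T^{n+1}$-equivariant Moser argument upgrades it to a symplectomorphism commuting with the Liouville scaling. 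The main obstacle will be this last gluing step: one must patch the local models across the face strata compatibly with \emph{both} the symplectic form and the cone structure, controlling the equivariant primitives so that the Moser isotopy preserves homogeneity and hence descends to the underlying contact manifold.
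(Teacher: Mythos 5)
Your existence construction follows the same Delzant-type reduction that the paper sketches (the paper itself only outlines existence and refers to Lerman~\cite{Le1} for the full proof), but it contains one genuine error that matters precisely for the manifolds this paper is about. You define $K:=\exp(\fk)$ with $\fk=\ker\beta$ at the Lie-algebra level, i.e.\ you take the connected subtorus of dimension $d-n-1$. The correct group to reduce by is the full kernel $K=\ker\bigl(\beta\colon\T^d\to\T^{n+1}\bigr)$ as in~(\ref{eq:defK}), which in general is disconnected: it is an extension of a finite abelian group by your subtorus. If you reduce only by the identity component, the residual torus is $\T^d/\exp(\fk)$, which covers $\T^{n+1}$ with finite deck group, so the induced $\T^{n+1}$-action on your quotient is not effective (equivalently, the integral-affine structure on the moment cone is wrong) and the resulting space is a finite cover of $W_C$. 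Concretely, for the cone with normals $\nu_1=(q+1,p,1)$, $\nu_2=(0,0,1)$, $\nu_3=(1,0,1)$, $\nu_4=(q,p,1)$ treated in Section~\ref{s:Gorenstein}, one has $K\cong S^1\times\Z_p$; reducing by the $S^1$ alone yields $S^3\times S^2$ rather than $S^\ast L^3_p(q)$. So your construction as written only produces the simply connected cover of the intended cone; you must quotient by the whole of $\ker\beta$, and condition (ii) of Definition~\ref{def:good} still guarantees that this larger (disconnected) group acts freely on the zero level set.

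The rest of the existence argument (freeness from condition (ii), identification of the image with $C$, properness/closedness from strict convexity) is correct and is exactly the paper's route. On uniqueness, the paper proves nothing itself and defers entirely to Lerman's classification; your outline via action-angle coordinates on $\mu_C^{-1}(\mathrm{int}\,C)$, local normal forms near the faces, and an equivariant Moser argument is the standard strategy, and you correctly identify the patching of local models compatibly with both the symplectic form and the homogeneity under the Liouville flow as the step where the real work lies. Lerman's argument organizes this gluing cohomologically (classifying the possible toric symplectic cones over a fixed cone by a sheaf-theoretic invariant that vanishes), which is what makes the patching rigorous; as written, your uniqueness paragraph is a plausible plan rather than a proof.
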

The existence part of this theorem follows from an explicit symplectic reduction of the standard euclidean
symplectic cone $(\R^{2d}\setminus\{0\}, \omega_{\rm st}, X_{\rm st})$, where $d$ is the number of defining normals of the
good cone $C\subset\R^{n+1}$, with respect to the action of a subgroup $K\subset\T^d$ induced by the standard
action of $\T^d$ on $\R^{2d}\setminus\{0\} \cong \C^d \setminus\{0\}$. More precisely,
\begin{equation} \label{eq:defK}
K := \left\{[y]\in\T^d \mid \sum_{j=1}^d y_j \nu_j \in 2\pi\Z^{n+1}   \right\}\,,
\end{equation}
where $\nu_1, \ldots, \nu_d \in \Z^{n+1}$ are the defining normals of $C$, i.e. $K:= \ker (\beta)$ where $\beta :\T^d \to \T^{n+1}$
is represented by the matrix
\begin{equation} \label{eq:defBeta}
\left[\ \nu_1 \ | \ \cdots \ | \ \nu_d \  \right]\,.
\end{equation}
Depending on the context, which will be clear in each case, we will also denote by $\beta$ the map from $\Z^d$ to $\Z^{n+1}$  
represented by this matrix.

The Chern classes of a co-oriented contact manifold can be canonically identified with 
the Chern classes of the tangent bundle of the associated symplectic cone.
The following proposition gives a moment cone characterization of the vanishing of the first Chern class 
that is commonly used in toric Algebraic Geometry (see, e.g., \S $4$ of~\cite{BD}).
\begin{prop} \label{prop:c_1}
Let $(W_C, \omega_C,X_C)$ be a good toric symplectic cone.
Let $\nu_1,\ldots,\nu_d \in \Z^{n+1}$ be the defining normals of the corresponding moment cone 
$C\subset\R^{n+1}$. Then $c_1 (TW_C) = 0$ if and only if there exists $\nu^\ast \in (\Z^{n+1})^\ast$ 
such that
\[
\nu^\ast (\nu_j) = 1\,,\ \forall\ j=1,\ldots,d\,.
\]
\end{prop}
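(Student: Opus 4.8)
The plan is to read off $c_1(TW_C)$ from the explicit symplectic reduction $W_C=Z/K$ recalled above, and then to translate the vanishing of $c_1$ into a condition on a single character of $K$ that is literally the combinatorial condition on $\nu^\ast$. First I would compute $TW_C$ by the standard reduction argument. Writing $Z:=\mu_K^{-1}(0)\subset\C^d\setminus\{0\}$ for the level set defining $W_C=Z/K$, at each $z\in Z$ one has the symplectic-orthogonal decomposition $T_z\C^d=T_{[z]}W_C\oplus(\fk\oplus J\fk)$, where $J$ is the complex structure and $\fk\oplus J\fk$ is the $J$-invariant span of the infinitesimal $K$-orbit directions together with their $J$-images. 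Since $K$ is abelian this complement is a trivial complex bundle of rank $k:=d-(n+1)$, while the coordinate decomposition of $\underline{\C^d}$ descends over the free $K$-action to $\bigoplus_{j=1}^d L_j$, with $L_j:=Z\times_K\C_{\chi_j}$ the line bundle associated with the character $\chi_j:=e_j^\ast|_K$ of $K$. This gives the complex vector bundle isomorphism
\[
TW_C\oplus\underline{\C^{\,k}}\;\cong\;\bigoplus_{j=1}^d L_j,
\]
hence $c_1(TW_C)=\sum_{j=1}^d c_1(L_j)$.

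Next I would package the right-hand side as one class. The assignment $\chi\mapsto c_1(L_\chi)$, with $L_\chi:=Z\times_K\C_\chi$, defines a homomorphism $c\colon\widehat K\to H^2(W_C;\Z)$ from the character lattice of $K$, under which $c_1(TW_C)=c_1(L_\chi)$ for $\chi:=\sum_{j=1}^d\chi_j$. By construction $\chi$ is the restriction to $K$ of the character $(1,\dots,1)$ of $\T^d$, i.e. $\chi=q(1,\dots,1)$ where $q\colon(\Z^d)^\ast\to\widehat K$ is the quotient dual to~\eqref{eq:defBeta}. Since the $\nu_j$ span $\R^{n+1}$ the map $\beta^\ast$ is injective, and we have the exact sequence
\[
0\longrightarrow(\Z^{n+1})^\ast\xrightarrow{\ \beta^\ast\ }(\Z^d)^\ast\xrightarrow{\ q\ }\widehat K\longrightarrow 0,
\qquad \beta^\ast(\nu^\ast)=(\nu^\ast(\nu_1),\dots,\nu^\ast(\nu_d)).
\]
Thus $\chi$ is the trivial character $\iff(1,\dots,1)\in\operatorname{im}\beta^\ast\iff$ there exists $\nu^\ast\in(\Z^{n+1})^\ast$ with $\nu^\ast(\nu_j)=1$ for all $j$, which is exactly the condition in the statement. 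It remains to match triviality of $\chi$ with $c_1(L_\chi)=0$.

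One implication is immediate: if such a $\nu^\ast$ exists then $\chi$ is trivial, so $L_\chi$ is the trivial line bundle and $c_1(TW_C)=c_1(L_\chi)=0$. The converse, $c_1(L_\chi)=0\Rightarrow\chi$ trivial, is the heart of the matter and amounts to showing that $c\colon\widehat K\to H^2(W_C;\Z)$ is injective. The hard part will be that, because we allow non-simply-connected $W_C$, the group $K$ is in general disconnected (its component group is the torsion of $\operatorname{coker}\beta$, i.e. $\pi_1(M)$) and $H^2(W_C;\Z)$ carries torsion, so injectivity has to be established integrally and not merely rationally.

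To overcome this I would exploit the topology of $Z$. Since $\C^d\setminus\{0\}\simeq S^{2d-1}$ and $Z$ is the regular level set cut out inside it by the $\fk$-valued moment map, one checks that $Z$ is $2$-connected (in particular simply connected with $H^2(Z;\Z)=0$); then $c$ is the transgression/classifying map of the principal $K$-bundle $Z\to W_C$, and the Leray–Serre spectral sequence forces $c$ to be injective. Equivalently, this is the standard toric identification $H^2(W_C;\Z)\cong\widehat K$ (cf.~\S4 of~\cite{BD}). Combining this with the previous paragraphs yields $c_1(TW_C)=0\iff\chi$ trivial $\iff\exists\,\nu^\ast$ with $\nu^\ast(\nu_j)=1$ for all $j$, as required.
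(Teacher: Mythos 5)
The paper itself gives no proof of Proposition~\ref{prop:c_1}; it is recalled as a standard fact of toric algebraic geometry with a pointer to \S 4 of~\cite{BD}, where it amounts to saying that the (anti)canonical divisor $\sum_j D_j$ of the associated toric variety is principal precisely when the support function taking the value $1$ on every $\nu_j$ is globally linear and integral, i.e.\ given by some $\nu^\ast\in(\Z^{n+1})^\ast$. Your argument is therefore a genuinely different, self-contained symplectic proof, and it is correct: the Euler-type splitting $TW_C\oplus\underline{\C}^{\,d-n-1}\cong\bigoplus_j L_j$ over the reduced space (the complement $\fk\oplus J\fk$ is trivialized $K$-equivariantly because $K$ is abelian), the dual exact sequence $0\to(\Z^{n+1})^\ast\to(\Z^d)^\ast\to\widehat K\to 0$ (which uses that $\beta:\T^d\to\T^{n+1}$ is surjective, guaranteed since the $\nu_j$ span $\R^{n+1}$), and the reduction of the whole statement to injectivity of $\chi\mapsto c_1(L_\chi)$ are all sound, and you correctly isolate that the injectivity must be established integrally because both $\widehat K$ and $H^2(W_C;\Z)$ have torsion when $W_C$ is not simply connected. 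The one step you should not leave as ``one checks'' is the $2$-connectivity of $Z$, since that is exactly where the goodness of the cone enters: it guarantees both that $K$ acts freely on $Z$ (so $Z\to W_C$ is a genuine principal bundle with classifying map $W_C\to BK$) and that the coordinate subspaces removed from $\C^d$ to form $Z$ all have complex codimension at least two; simple connectivity of $Z$ is established in \cite[Section 3]{AM1}, and $\pi_2(Z)=0$ follows from the subspace-arrangement description or from the homotopy computations in \cite{Le2}. With that reference supplied your argument in fact proves slightly more than the proposition, namely the integral isomorphism $H^2(W_C;\Z)\cong\widehat K$ under which $c_1(TW_C)$ corresponds to $q(1,\dots,1)$, whereas the algebro-geometric route quoted by the paper is shorter if one is willing to invoke the toric dictionary.
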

By an appropriate change of basis of the torus $\T^{n+1}$, i.e. an appropriate $SL(n+1,\Z)$
transformation of $\R^{n+1}$, this implies the following.
\begin{cor} \label{cor:c_1}
Let $(W_C, \omega_C,X_C)$ be a good toric symplectic cone with $c_1 (TW_C) = 0$. Then there 
exists an integral basis of $\T^{n+1}$ for which the defining normals $\nu_1,\ldots,\nu_d \in \Z^{n+1}$ 
of the corresponding moment cone $C\subset\R^{n+1}$ are of the form
\[
\nu_j = (v_j, 1)\,,\ v_j \in \Z^n\,,\ j=1,\ldots,d\,.
\]
\end{cor}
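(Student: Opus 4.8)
The plan is to read the corollary off Proposition~\ref{prop:c_1} together with the elementary fact that a primitive integral covector can be completed to an integral basis. First I would apply Proposition~\ref{prop:c_1} to obtain a covector $\nu^\ast\in(\Z^{n+1})^\ast$ with $\nu^\ast(\nu_j)=1$ for all $j=1,\ldots,d$. The entire content of the corollary is that, after a suitable $SL(n+1,\Z)$ change of basis, this $\nu^\ast$ becomes the last coordinate functional, so that $\nu^\ast(\nu_j)=1$ turns into ``the last entry of $\nu_j$ equals $1$''.

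The first thing to verify is that $\nu^\ast$ is \emph{primitive} in $(\Z^{n+1})^\ast$. Indeed, if $\nu^\ast=k\,w^\ast$ for some integer $k\geq 2$ and $w^\ast\in(\Z^{n+1})^\ast$, then $1=\nu^\ast(\nu_1)=k\,w^\ast(\nu_1)$ would force $k$ to divide $1$, a contradiction; hence $\nu^\ast$ is primitive. Next, by the standard completion-to-a-basis statement (equivalently, via Smith normal form), a primitive covector occurs as a row of some matrix in $GL(n+1,\Z)$; I would take $A\in SL(n+1,\Z)$ having $\nu^\ast$ as its last row, negating one of the other rows if needed to arrange $\det A=+1$ while keeping the last row fixed. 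This $A$ is the desired change of integral basis of $\T^{n+1}$.

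Finally I would compute the effect of $A$ on the normals: applying $A$ replaces each $\nu_j$ by $A\nu_j$, and its last coordinate is $\langle e_{n+1}, A\nu_j\rangle=\langle A^{\top}e_{n+1},\nu_j\rangle=\nu^\ast(\nu_j)=1$ by the choice of $A$. Writing $A\nu_j=(v_j,1)$ with $v_j\in\Z^n$ then yields the claim. The only point deserving care—though not a genuine obstacle—is the passage from the pairing identity to a statement about the full coordinate vectors: it suffices because the defining normals $\nu_1,\ldots,\nu_d$ span $\R^{n+1}$, which holds since the good cone $C$ is strictly convex and full-dimensional (a proper-subspace span of the facet normals would force $C$ to contain a line, contradicting strict convexity). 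I expect no deeper difficulty, as the argument is essentially linear algebra over $\Z$ once Proposition~\ref{prop:c_1} is in hand.
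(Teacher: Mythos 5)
Your argument is correct and is exactly the one the paper intends: the paper gives no proof beyond the remark that Corollary~\ref{cor:c_1} follows from Proposition~\ref{prop:c_1} ``by an appropriate $SL(n+1,\Z)$ transformation,'' and your primitivity-of-$\nu^\ast$ plus completion-to-a-basis argument is the standard way to fill that in. (Your closing caveat about the normals spanning $\R^{n+1}$ is unnecessary, since the identity $\langle e_{n+1},A\nu_j\rangle=\nu^\ast(\nu_j)=1$ already gives the last coordinate directly, but it does no harm.)
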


The next definition and theorem are then the natural analogues for Gorenstein toric contact
manifolds of Definition~\ref{def:good} and Theorem~\ref{thm:good}.
\begin{defn} \label{def:diagram}
A \emph{toric diagram} $D\subset\R^n$ is an integral simplicial polytope with
all of its facets $\Aff(n,\Z)$-equivalent to $\conv(e_1, \ldots, e_n)$, where
$\{e_1,\ldots,e_n\}$ is the canonical basis of $\R^n$.
\end{defn}
\begin{rem} \label{rem:diagram}
The group $\Aff(n,\Z)$ of integral affine transformations of $\R^n$ can be naturally identified
with the elements of $SL(n+1,\Z)$ that preserve the hyperplane $\left\{ (v,1)\mid v\in\R^n \right\} \subset\R^{n+1}$.
\end{rem}
\begin{thm} \label{thm:diagram}
For each toric diagram $D\subset\R^n$ there exists a unique Gorenstein toric contact
manifold $(M_D, \xi_D)$ of dimension $2n+1$.
\end{thm}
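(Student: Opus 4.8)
The plan is to deduce Theorem~\ref{thm:diagram} from Theorem~\ref{thm:good} by exhibiting an explicit bijection between toric diagrams $D\subset\R^n$ and good cones $C\subset\R^{n+1}$ with $c_1=0$, and then applying Theorem~\ref{thm:good} to the cone associated to $D$. Since a Gorenstein toric contact manifold is exactly a good toric contact manifold whose moment cone has vanishing first Chern class, it suffices to show that toric diagrams and such Gorenstein good cones determine one another, compatibly with the relevant change-of-basis equivalences ($\Aff(n,\Z)$ on diagrams, $SL(n+1,\Z)$ on cones). Concretely, given a toric diagram $D$ with vertex set $\{v_1,\dots,v_d\}\subset\Z^n$, I would set $\nu_j:=(v_j,1)\in\Z^{n+1}$ and $C_D:=\bigcap_{j=1}^d\{x\in\R^{n+1}\mid\langle x,\nu_j\rangle\ge 0\}$; conversely, starting from a Gorenstein good cone $C$, Corollary~\ref{cor:c_1} lets me pick an integral basis of $\T^{n+1}$ in which the defining normals are $\nu_j=(v_j,1)$ with $v_j\in\Z^n$, and I would set $D_C:=\conv(v_1,\dots,v_d)$. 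Each $\nu_j$ is automatically primitive because its last coordinate is $1$, and $\nu^\ast:=e_{n+1}^\ast$ satisfies $\nu^\ast(\nu_j)=1$, so Proposition~\ref{prop:c_1} gives $c_1(TW_{C_D})=0$ for free.

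The core of the argument is to match the two lists of conditions through the order-reversing duality between the faces of $C$ and the faces of $D$, under which the defining normals (equivalently the facets of $C$) correspond to the vertices of $D$, and the rays of $C$ correspond to the facets of $D$. Strict convexity of $C$ corresponds to $D$ being full-dimensional, since the $\nu_j$ span $\R^{n+1}$ if and only if the $v_j$ affinely span $\R^n$; and minimality of the defining normals corresponds to each $v_j$ being an actual vertex of $D$. Applying the smoothness condition~(ii) of Definition~\ref{def:good} to the codimension-$n$ faces (the rays) forces every facet of $D$ to be the convex hull of exactly $n$ vertices---so $D$ is simplicial---and requires the normals $(v_{j_1},1),\dots,(v_{j_n},1)$ of each such facet to be completable to an integral basis of $\Z^{n+1}$.

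The decisive lemma is therefore that, for affinely independent $w_1,\dots,w_n\in\Z^n$, the simplex $\conv(w_1,\dots,w_n)$ is $\Aff(n,\Z)$-equivalent to $\conv(e_1,\dots,e_n)$ if and only if $(w_1,1),\dots,(w_n,1)$ can be completed to an integral basis of $\Z^{n+1}$. The forward direction follows by transporting an explicit completion of $(e_1,1),\dots,(e_n,1)$ by the vector $(0,1)$; the reverse direction requires producing the transforming affine map, which I would obtain by observing that the differences $(w_k,1)-(w_1,1)=(w_k-w_1,0)$ form a basis of a primitive rank-$(n-1)$ sublattice of $\Z^n\times\{0\}$, completing it to an integral basis, and adjusting by a translation. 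Granting this lemma, condition~(ii) for all $1\le k\le n$ follows at once, since every face of the simplicial polytope $D$ lies in a facet and hence has normals forming a subset of a basis-completable set, and a subset of a set that is part of an integral basis is itself part of an integral basis.

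Finally, I would verify that the two constructions are mutually inverse and descend to equivalence classes: by Remark~\ref{rem:diagram}, $\Aff(n,\Z)$ is precisely the subgroup of $SL(n+1,\Z)$ preserving the height-$1$ hyperplane, which is exactly the ambiguity in the normalization furnished by Corollary~\ref{cor:c_1}, so $D_C$ is well defined up to $\Aff(n,\Z)$ and $D\mapsto C_D$ is its inverse. The uniqueness of $(M_D,\xi_D)$ is then inherited directly from the uniqueness clause of Theorem~\ref{thm:good} applied to $C_D$. I expect the main obstacle to be this unimodularity lemma---in particular its reverse implication, together with the bookkeeping in the face-lattice duality that ensures smoothness of $C$ in \emph{every} codimension is captured by the single facet condition on $D$; the matching of the two natural equivalence groups through Remark~\ref{rem:diagram} is the other point demanding care.
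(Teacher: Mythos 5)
Your argument is correct and is exactly the route the paper intends: Theorem~\ref{thm:diagram} is stated there without proof, but the surrounding apparatus (Proposition~\ref{prop:c_1}, Corollary~\ref{cor:c_1}, Definition~\ref{def:diagram} and Remark~\ref{rem:diagram}) is precisely the scaffolding for reducing it to Theorem~\ref{thm:good} via the bijection between toric diagrams and good cones with vanishing first Chern class, modulo $\Aff(n,\Z)$ and $SL(n+1,\Z)$ respectively. Your unimodularity lemma and the face-lattice bookkeeping correctly supply the details the paper leaves implicit.
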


The $\T^{n+1}$-action associates to every vector $\nu \in \R^{n+1}$ a contact vector field $R_\nu \in \Xx (M_D, \xi_D)$. 
We will say that a contact form $\alpha_\nu \in \Omega^1 (M_D,\xi_D)$ is \emph{toric} if its Reeb vector field $R_{\alpha_\nu}$ 
satisfies
\[
R_{\alpha_\nu} = R_\nu \quad\text{for some $\nu\in\R^{n+1}$.}
\]
In this case we will say that $\nu\in\R^{n+1}$ is a \emph{toric Reeb vector} and that $R_\nu$ is a \emph{toric Reeb vector field}.

\begin{defn} \label{def:reeb}
A \emph{normalized} toric Reeb vector is a toric Reeb vector $\nu\in\R^{n+1}$ of the form
\[
\nu = (v,1) \quad\text{with $v\in\R^n$.}
\]
\end{defn}
\begin{prop}[{\cite{MSY} or \cite[Corollary 2.15]{AM2}}] \label{prop:reeb}
The interior of a toric diagram $D \subset \R^n$ parametrizes the set of normalized toric 
Reeb vectors on the Gorenstein toric contact manifold $(M_D, \xi_D)$, i.e. $\nu = (v,1)$ is
a normalized toric Reeb vector iff $v\in\textup{int} (D)$.
\end{prop}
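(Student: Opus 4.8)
The plan is to translate the contact-geometric statement into convex geometry and then reduce everything to a single slicing computation. First I would recall the characterization of toric Reeb vectors: a vector $\nu\in\R^{n+1}$, viewed as an element of the Lie algebra of $\T^{n+1}$, defines a toric contact form $\alpha_\nu$ on $(M_D,\xi_D)$ precisely when the generating Hamiltonian $\langle\mu_C,\nu\rangle$ is strictly positive on $W_C$. Since $\mu_C(W_C)=C\setminus\{0\}$, this is equivalent to $\langle x,\nu\rangle>0$ for every $x\in C\setminus\{0\}$. This is exactly the content of the cited results of~\cite{MSY} and~\cite[Corollary 2.15]{AM2}, and it identifies the set of toric Reeb vectors with the interior of the dual cone
\[
C^\ast:=\{\nu\in\R^{n+1}\mid \langle x,\nu\rangle\geq 0\ \text{for all}\ x\in C\}\,.
\]

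Next I would compute $C^\ast$ combinatorially. Because $C$ is good, it is strictly convex and full-dimensional, and $C=\bigcap_{j=1}^d\{x\mid\langle x,\nu_j\rangle\geq 0\}$. Standard duality for polyhedral cones then gives that $C^\ast$ is the conical hull of the defining normals, $C^\ast=\{\sum_{j=1}^d t_j\nu_j\mid t_j\geq 0\}$, again a full-dimensional strictly convex cone in $\R^{n+1}$. By Corollary~\ref{cor:c_1} I may assume the normals are already in the normalized form $\nu_j=(v_j,1)$ with $v_j\in\Z^n$, so that the toric diagram is $D=\conv(v_1,\ldots,v_d)\subset\R^n$.

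The heart of the argument is then a slicing computation at height one. Intersecting $C^\ast$ with the affine hyperplane $H=\{x\in\R^{n+1}\mid x_{n+1}=1\}$, a point $\sum_j t_j\nu_j$ with $t_j\geq 0$ lies in $H$ exactly when $\sum_j t_j=1$, since each $\nu_j$ has last coordinate equal to $1$. Hence $C^\ast\cap H$ is identified, via $(v,1)\mapsto v$, with $\conv(v_1,\ldots,v_d)=D$. Because $H$ is transverse to every ray of the full-dimensional strictly convex cone $C^\ast$ — all of whose nonzero elements have positive $(n+1)$-coordinate — intersecting with $H$ carries the interior of $C^\ast$ onto the relative interior of the slice, so that $\textup{int}(C^\ast)\cap H$ corresponds to $\textup{int}(D)$. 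Combining the three steps, $\nu=(v,1)$ is a normalized toric Reeb vector iff $\nu\in\textup{int}(C^\ast)$ iff $v\in\textup{int}(D)$, which is the claim.

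The cone duality and the identification of the slice with $D$ are routine; the one point that deserves care is the final step, namely that slicing sends interior to relative interior, i.e. that no boundary ray of $C^\ast$ meets $H$ in the interior of $D$ and conversely. The main obstacle is therefore to justify this transversality cleanly, which follows from the full-dimensionality and strict convexity of $C^\ast$ together with the fact that $H$ is parallel to none of its faces. If one wishes to avoid invoking~\cite{MSY,AM2} for the first step, one can instead establish positivity of $\langle\mu_C,\nu\rangle$ directly from the explicit symplectic reduction description of $(W_C,\om_C,X_C,\mu_C)$ underlying Theorem~\ref{thm:good}.
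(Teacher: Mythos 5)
Your argument is correct. The paper does not prove this proposition at all --- it is quoted directly from \cite{MSY} and \cite[Corollary 2.15]{AM2} --- and your chain (toric Reeb vectors $=$ interior of the dual cone $C^\ast$, polyhedral duality identifying $C^\ast$ with the cone generated by the normals $\nu_j=(v_j,1)$, and the height-one slice $C^\ast\cap\{x_{n+1}=1\}=\conv(\nu_1,\ldots,\nu_d)$ carrying $\textup{int}(C^\ast)$ onto $\textup{int}(D)$) is exactly the standard argument underlying those citations, with the transversality point you flag correctly handled by strict convexity and full-dimensionality of $C^\ast$.
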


\subsection{Fundamental group}
\label{ss:pi1}

Let $W_C$ be a good toric symplectic cone and $\nu_1,\ldots,\nu_d \in \Z^{n+1}$ the defining normals of the corresponding 
moment cone $C\subset\R^{n+1}$. By a result of Lerman~\cite{Le2}, the fundamental group $\pi_1(W_C)$ is isomorphic 
(canonically with respect to base points) to $\Z^{n+1}/\mathcal N$, where $\mathcal N$ is the $\Z$-span of $\{\nu_1, \ldots, 
\nu_d\}$. Any toric Reeb vector field $R_\nu$, $\nu\in\R^{n+1}$, has at least $m$ simple closed orbits $\gamma_1, \ldots, \gamma_m$, 
corresponding to the $m$ edges $E_1, \ldots, E_m$ of the cone $C$, and we will now determine how these simple closed 
orbits can be seen and related as elements of $\pi_1(W_C)$.

Consider an edge $E_\ell$ of $C$ obtained as the intersection of $n$ facets with normals $\nu_{\ell_1}, \ldots, \nu_{\ell_n}$. 
We can complete $\nu_{\ell_1}, \ldots, \nu_{\ell_n}$ to a $\Z^{n+1}$-basis $\nu_{\ell_1}, \ldots, \nu_{\ell_n}, \eta_\ell$. 
Given a toric Reeb vector field $R_\nu$, we can write  $\nu\in\R^{n+1}$ uniquely as
\[
\nu=\sum_{i=1}^n b_{\ell_i}\nu_{\ell_i}+b_\ell \eta_\ell\,.
\]
Then, the simple closed orbit $\gamma_\ell$ of $R_\nu$ corresponding to the edge $E_\ell$ represents the element 
$[\textup{sgn}(b_\ell)\eta_\ell]\in \Z^{n+1}/\mathcal N$ in the fundamental group of $W_C$. 
Consider the homomorphism $\phi: \Z\to \Z^{n+1}/\mathcal N$ defined by $\phi(1)=[\eta_\ell]$. Since 
\[
\eta_\ell\Z+\mathcal N\supseteq \eta_\ell\Z+\nu_{\ell_1}\Z+\ldots+\nu_{\ell_n}\Z=\Z^{n+1}\,,
\]
the homomorphism $\phi$ is surjective. Thus $\Z^{n+1}/\mathcal N\cong \Z/\ker \phi=\Z_{N_\ell}$ where 
\[
N_\ell=\min\{k\in \Z^+: k\eta_\ell\in \mathcal N\}\,.
\]
This shows that $\pi_1(W_C)$ is cyclic and is generated by any of the simple closed orbits $\gamma_\ell$ of $R_\nu$. 
Moreover it follows that $N_\ell=N$ is the same for each edge $E_\ell$ and is the order of the fundamental group of $W_C$. 
A way to compute this order $N$ is the following (see also~\cite{HLi}).

\begin{proposition}
The fundamental group of $W_C$ is a cyclic group of order $N$ where 
\[
N=\gcd\left\{\begin{vmatrix}
\vert &  & \vert\\ 
\nu_{i_1} & \ldots & \nu_{i_{n+1}}\\ 
\vert &  & \vert
\end{vmatrix}: 1\leq i_1<\ldots<i_{n+1}\leq d\right\}\,.
\]
In particular, if $C$ is determined by a toric diagram $D=\textup{conv}(v_1, \ldots, v_d)$ (i.e. $\nu_j=(v_j,1)$) then 
\[
N=\gcd\left\{\frac{1}{n!}\textup{Vol}(\textup{conv}(v_{i_1}, \ldots, v_{i_{n+1}})): 1\leq i_1<\ldots<i_{n+1}\leq d\right\}\,.
\]
\end{proposition}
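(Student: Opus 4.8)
The plan is to reduce the statement to a standard computation of the index of a finite-index sublattice via the Smith normal form. From the discussion preceding the proposition we already know that $\pi_1(W_C) \cong \Z^{n+1}/\mathcal N$, where $\mathcal N$ is the $\Z$-span of $\nu_1, \ldots, \nu_d$, and that this group is finite cyclic; call its order $N$. Finiteness of the quotient is precisely the assertion that $\mathcal N$ has full rank $n+1$ in $\Z^{n+1}$ (here $d\geq n+1$, as guaranteed for a good cone), so that $N = [\Z^{n+1} : \mathcal N]$ is the index of the sublattice. Thus the first claim amounts to computing this index.

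First I would encode $\mathcal N$ as the image of the integral linear map $\beta : \Z^d \to \Z^{n+1}$ represented by the matrix $B = [\,\nu_1 \mid \cdots \mid \nu_d\,]$ of~\eqref{eq:defBeta}, so that $\Z^{n+1}/\mathcal N = \operatorname{coker}(\beta)$. Bringing $B$ to Smith normal form by unimodular row and column operations produces invariant factors $s_1 \mid s_2 \mid \cdots \mid s_{n+1}$, all positive since the cokernel is finite, with $\operatorname{coker}(\beta) \cong \bigoplus_{i=1}^{n+1} \Z/s_i\Z$; hence $N = s_1 s_2 \cdots s_{n+1}$. The key fact I would invoke is that the $k$-th determinantal divisor $D_k$, the gcd of all $k\times k$ minors of $B$, is invariant under unimodular operations and satisfies $D_k = s_1 \cdots s_k$. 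In particular the top determinantal divisor $D_{n+1}$ equals the product of all invariant factors, so $N = D_{n+1}$ is exactly the gcd of the maximal $(n+1)\times(n+1)$ minors of $B$. Since each such minor is $\det[\,\nu_{i_1} \mid \cdots \mid \nu_{i_{n+1}}\,]$ for some $1 \le i_1 < \cdots < i_{n+1} \le d$, this is the first displayed formula. (Because we already know the group is cyclic one even has $s_1 = \cdots = s_n = 1$, so that $D_n = 1$ and $N = D_{n+1}$; but the determinantal-divisor argument delivers the order directly, without invoking cyclicity.)

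For the ``in particular'' statement I would rewrite each maximal minor under the Gorenstein normalization $\nu_j = (v_j, 1)$ of Corollary~\ref{cor:c_1}. Subtracting the first of the chosen columns from the remaining ones and expanding along the bottom row of $1$'s gives
\[
\det[\,\nu_{i_1} \mid \cdots \mid \nu_{i_{n+1}}\,] = \pm\,\det\bigl(v_{i_2} - v_{i_1}, \ldots, v_{i_{n+1}} - v_{i_1}\bigr),
\]
and the absolute value of the right-hand determinant is $n!$ times the Euclidean volume of the simplex $\conv(v_{i_1}, \ldots, v_{i_{n+1}})$, i.e. its lattice-normalized volume. Feeding this identification of each minor with the normalized volume of the corresponding simplex into $N = \gcd\{\,|\det[\,\nu_{i_1}\mid\cdots\mid\nu_{i_{n+1}}\,]|\,\}$ yields the second formula.

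I expect no serious obstacle: the argument is standard $\Z$-module linear algebra, and the only points requiring care are the verification that the quotient is finite, so that the order is genuinely the product of the invariant factors (this is guaranteed by the preceding paragraph, which establishes that $\mathcal N$ has full rank), and keeping track of the volume normalization in the final substitution.
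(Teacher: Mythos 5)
Your proof is correct and follows essentially the same route as the paper's: both compute $\pi_1(W_C)\cong\Z^{n+1}/\mathcal N$ as the cokernel of the matrix of normals via Smith normal form and identify the order with the top determinantal divisor, i.e.\ the gcd of the maximal $(n+1)\times(n+1)$ minors. The only cosmetic difference is that the paper re-derives cyclicity inside the proof by showing $d_n(A)=1$ directly from the good-cone condition at an edge, whereas you import cyclicity from the preceding discussion and note that $N=s_1\cdots s_{n+1}=D_{n+1}$ holds in any case.
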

\begin{proof}
Let 
\[
A=\begin{bmatrix}
\vert &  & \vert\\ 
\nu_{1} & \ldots & \nu_{d}\\ 
\vert &  & \vert
\end{bmatrix}\,,
\]
which is a $(n+1)\times d$ matrix. By the Smith normal form we have 
\[
\pi_1(W_C)\cong \ZZ^{n+1}/\mathcal N=\ZZ^{n+1}/\textup{im}(A)\cong \ZZ_{\alpha_1}\oplus \ldots \oplus \ZZ_{\alpha_{n+1}}\,,
\]
where $\alpha_1, \ldots, \alpha_{n+1}$ are the invariant factors of $A$ given by 
\[
\alpha_i=\frac{d_i(A)}{d_{i-1}(A)}\,,
\]
where $d_i(A)$ is the greatest common divisor of all the $i\times i$ minors of $A$ ($d_0(A)$ is set to be $1$). We claim that 
$d_n(A)=1$, from which follows that $\alpha_1=\ldots=\alpha_n = 1$. 

Indeed let $\nu_{\ell_1}, \ldots, \nu_{\ell_n}$ be the normals corresponding to facets intersecting at an edge $E_\ell$ of $C$ 
and let $\eta_\ell\in \ZZ^{n+1}$ be such that $\nu_{\ell_1}, \ldots, \nu_{\ell_n},\eta_\ell$ is a basis of $\ZZ^{n+1}$. Then
\[
\begin{vmatrix}\vert & & \vert & \vert\\ 
\nu_{\ell_1} & \ldots &\nu_{\ell_{n}} & \eta_\ell \\ 
\vert & & \vert & \vert
\end{vmatrix}=\pm 1\,.
\]
By the Laplace rule on the last column we get an integral combination of the $n\times n$ minors of 
\[
\begin{bmatrix}
\vert &  & \vert\\ 
\nu_{\ell_1} & \ldots & \nu_{\ell_n}\\ 
\vert &  & \vert
\end{bmatrix}
\]
giving $1$, hence their greatest common divisor is $1$, showing that $d_n(A)=1$.

The result now follows since $\pi_1(W_C)\cong \ZZ_{\alpha_{n+1}}$ and $\alpha_{n+1}=d_{n+1}(A)=N$. \qedhere
\end{proof}

If we now consider two different edges, say $E_\ell$ and  $E_k$, how can we relate the corresponding simple closed Reeb orbits,
$\gamma_\ell$ and $\gamma_k$, as elements in the fundamental group $\pi_1 (W_C)$? 
Let $\nu_{\ell_1}, \ldots, \nu_{\ell_n}, \eta_\ell$ and $\nu_{k_1}, \ldots, \nu_{k_n}, \eta_k$ be integral basis of $\ZZ^{n+1}$ corresponding to the edges $E_\ell$ and $E_k$, respectively. Consider the change of $\Z$-basis
\begin{align}
\eta_\ell&=\sum_{j=1}^nb_j\nu_{k_j}+b\eta_k  \label{eq1}\\
\nu_{\ell_i}&=\sum_{j=1}^na_{ij}\nu_{k_j}+a_i\eta_k  \label{eq2}
\end{align}
with $b_j, b, a_{ij}, a\in \ZZ$. Then
\[
\begin{vmatrix}
b_1 & \ldots & b_n &b \\ 
a_{11} & \ldots & a_{1n} &a_1 \\ 
\vdots &  & \vdots & \vdots\\ 
a_{n1} & \ldots & a_{nn} & a_n
\end{vmatrix}=\pm1\,.
\]
If follows from~(\ref{eq2}) that $a_i\eta_k\in \mathcal N$, and so we have that $N|a_i$ for $i=1, \ldots, n$. Thus, the determinant being $\pm 1$ gives
$$b\begin{vmatrix}
a_{11} & \ldots & a_{1n} \\
\vdots &  & \vdots \\ 
a_{n1} & \ldots & a_{nn} 
\end{vmatrix}\equiv \pm1 \mod N.$$
Equation~(\ref{eq1}) gives $[\eta_\ell]=b[\eta_k]$, that is, $[\eta_k]=c[\eta_\ell]$ where
\[
c\equiv b^{-1}\equiv \pm \begin{vmatrix}
a_{11} & \ldots & a_{1n} \\
\vdots &  & \vdots \\ 
a_{n1} & \ldots & a_{nn} 
\end{vmatrix} \mod N\,.
\]
Moreover, this is still valid if $a_{ij}$ satisfy the equations
\[
\nu_{\ell_i}\equiv \sum_{j=1}^na_{ij}\nu_{k_j} \mod N\,,
\]
which allows the computation of $c$ without determining $\eta_\ell, \eta_k$.

\subsection{Examples}
\label{ss:exanples}

\subsubsection{Prequantizations}

One source for examples of good toric contact manifolds is the prequantization construction over integral closed 
toric symplectic manifolds, i.e. $(M,\xi)$ with $M$ given by the $S^1$-bundle over $(B,\omega)$ with Euler class 
$-[\omega]/2\pi$ and $\xi$ being the horizontal distribution of a connection with curvature $\omega$. The 
corresponding good cones have the form
\[
C:= \left\{z(x,1)\in\R^{n}\times\R\mid x\in P\,,\ z\geq 0\right\}
\subset\R^{n+1}
\]
where $P\subset\R^n$ is a Delzant polytope with vertices in the integer lattice $\Z^n\subset\R^n$.
Note that if
\[
P = \bigcap_{j=1}^d \{x\in\R^{n}\mid \langle x, v_j \rangle + \lambda_j \geq 0\}\,,
\]
with integral $\lambda_1, \ldots, \lambda_d \in \Z$ and primitive $v_1,\ldots, v_d \in \Z^n$, then
the defining normals of $C \subset \R^{n+1}$ are
\[
\nu_j = (v_j, \lambda_j)\,,\ j=1, \ldots, d\,.
\]
When $\lambda_1 = \cdots \lambda_d = 1$ we have that $c_1(TB,\omega) = [\omega]/2\pi$ and the prequantization is a
Gorenstein toric contact manifold with toric diagram $D := \conv(v_1,\ldots, v_d) \subset \R^n$.

\subsubsection{$3$-dimensional lens spaces and their unit cosphere bundles}

A $3$-dimensional lens space $L^3_p(q):= L^3_p (1,q)$, where $p,q\in\N$ are coprime, is the quotient of 
$S^{3}\subset \C^{2}\setminus \{0\}$ by the free action of $\Z_p$ generated by 
\[
[1].(z_0, z_1)=\left(e^{\frac{2\pi i}{p}}z_0, e^{\frac{2\pi i q}{p}}z_1 \right)\,.
\]
Recall that the unit cosphere bundle $S^\ast N$ of any smooth (oriented) manifold $N$ has a canonical (co-oriented) 
contact structure with symplectization $T^\ast N\setminus \{\text{$0$-section}\}$ with its canonical symplectic structure.
We will now show that  any $S^\ast L^3_p(q)$ is a Gorenstein toric contact manifold with toric diagram given by a 
parallelogram in $\R^2$.

Let us start by characterizing the smooth $5$-manifolds that can be obtained from parallelograms in $\R^2$.
\begin{proposition} 
Let $D=\textup{conv}((0,0), (1,0), (q,p), (q+1, p))$ be a toric diagram where $p,q\in\N$ are coprime. Then the 
corresponding manifold $M=M_D$ is diffeomorphic to $S^\ast L\cong L \times S^2$ where $L=L^3_p(q)$.
\end{proposition}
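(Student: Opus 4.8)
The plan is to realize $M=M_D$ as a free finite quotient of the link of the conifold and to match that quotient with a lens-space action. Two reductions organize the argument. First, since every closed orientable $3$-manifold is parallelizable, the cotangent bundle $T^\ast L$ is trivial and hence $S^\ast L\cong L\times S^2$; this disposes of the second diffeomorphism in the statement, so it remains to prove $M_D\cong S^\ast L$. For this I would work at the level of symplectic cones, identifying the toric symplectic cone $W_C$ of Theorem~\ref{thm:good} with the symplectization $T^\ast L\setminus\{\text{$0$-section}\}=(T^\ast S^3\setminus\{\text{$0$-section}\})/\Z_p$ of $S^\ast L$.

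To describe $W_C$ explicitly I would first record the defining normals of $C$ given by Corollary~\ref{cor:c_1}, namely $\nu_1=(0,0,1)$, $\nu_2=(1,0,1)$, $\nu_3=(q,p,1)$, $\nu_4=(q+1,p,1)$, and then analyze the subgroup $K=\ker\beta\subset\T^4$ of~\eqref{eq:defK}. A direct computation shows that $\ker(\beta\colon\R^4\to\R^3)$ is spanned by $(1,-1,-1,1)$, so the identity component $K^0\cong S^1$ acts on $\C^4$ with weights $(1,-1,-1,1)$; moreover $\pi_0(K)=\operatorname{coker}(\beta\colon\Z^4\to\Z^3)\cong\Z_p$, in agreement with the order $N=p$ of $\pi_1(W_C)$ computed above. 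Consequently $W_C$ is obtained from $(\C^4\setminus\{0\})/\!/S^1$ by a further free $\Z_p$-quotient, the $\Z_p$ being the deck group of the universal cover.

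Next I would identify the intermediate reduction. Grouping the weight $+1$ coordinates as $a=(w_1,w_4)$ and the weight $-1$ coordinates as $b=(w_2,w_3)$, the map $\Phi(a,b)=a\,b^{T}$ is $S^1$-invariant and identifies $(\C^4\setminus\{0\})/\!/S^1$ with the punctured conifold $\{M\in\mathrm{Mat}_{2\times2}(\C): \det M=0\}\setminus\{0\}$, which after a linear change of coordinates is the quadric cone $\{\sum_{j=1}^4 z_j^2=0\}\setminus\{0\}$. Via the standard symplectomorphism $T^\ast S^3\setminus\{\text{$0$-section}\}\cong\{\sum_{j}z_j^2=0\}\setminus\{0\}$, this is the symplectization of $S^\ast S^3$, whose link is the Stiefel manifold $V_2(\R^4)\cong S^\ast S^3\cong S^2\times S^3$; it carries the residual $\SU(2)\times\SU(2)$ action $M\mapsto gMh^{T}$ induced by $a\mapsto ga$, $b\mapsto hb$, which is the isometry (cogeodesic) action on $S^\ast S^3$.

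The main obstacle is the final matching: I must show that the residual $\Z_p=\pi_0(K)$ acts as the cotangent lift of the weight $(1,q)$ action defining $L=L^3_p(q)$. Choosing the representative $\big(\tfrac{q-1}{p},-\tfrac{q}{p},\tfrac1p,0\big)$ of a generator of $\pi_0(K)$, one checks it lies in $K$ (its image under $\beta$ is $(0,1,0)$) and acts on $(w_1,w_2,w_3,w_4)$ by $(\zeta^{q-1},\zeta^{-q},\zeta,1)$ with $\zeta=e^{2\pi i/p}$, hence on the conifold coordinates $(M_{11},M_{12},M_{21},M_{22})$ with weights $(-1,q,-q,1)$. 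Using the $\SU(2)\times\SU(2)$ action to coordinatize the footpoint $S^3=\SU(2)$ inside $S^\ast S^3$, I would then verify that this generator descends to the base precisely as the lens transformation $(z_0,z_1)\mapsto(\zeta z_0,\zeta^{q}z_1)$. Granting this, $M_D=S^\ast S^3/\Z_p=S^\ast(S^3/\Z_p)=S^\ast L^3_p(q)$, and combining with the first paragraph gives $M_D\cong S^\ast L\cong L\times S^2$. The step I expect to require the most care is disentangling this $\Z_p$ from the $S^1$ we quotiented by so as to pin down the exact weights $(1,q)$ on the base, rather than merely those of some $L^3_p(q')$: the naive $S^3$-bundle $S^\ast S^3\to\C P^1_a$ obtained from $\Phi$ does \emph{not} make the $\Z_p$-action fibrewise, so the correct footpoint $S^3$ must be extracted from the isometry action before the weights can be read off.
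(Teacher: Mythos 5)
Your route is genuinely different from the paper's and, as far as it goes, correct. You realize the intermediate reduction $(\C^4\setminus\{0\})/\!/S^1$ as the punctured rank-one locus $\{ab^T\}=\{\det M=0\}\setminus\{0\}$, hence as the quadric model of $T^\ast S^3\setminus\{0\text{-section}\}$ with its $\SU(2)\times\SU(2)$ symmetry, whereas the paper stays with $(S^3\times S^3)/S^1$, identifies it with $S^3\times \C P^1$ via the explicit quaternionic map $\psi(z_1,z_2,z_3,z_4)=(z_1z_3-\bar z_2\bar z_4,\ z_1z_4+\bar z_2\bar z_3,\ z_1/z_2)$, and then transports the residual finite action to $SS^3$ through the Lie-group trivialization $(x,v)\mapsto(x,vx^{-1})$. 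Your computations agree with the paper's after reordering the normals: the kernel direction $(1,-1,-1,1)$, $\pi_0(K)\cong\Z_p$, and your generator with weights $(\zeta^{q-1},\zeta^{-q},\zeta,1)$ is exactly the paper's $\xi$. Your opening reduction of $S^\ast L\cong L\times S^2$ to parallelizability of oriented $3$-manifolds is also fine (the paper obtains this identification as a by-product instead). The conifold picture is arguably cleaner conceptually; the quaternionic picture has the advantage that the final weight computation is a one-line identity.

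The genuine gap is the step you name and then defer: verifying that the residual $\Z_p$, acting on the conifold coordinates with weights $(-1,q,-q,1)$, descends through the identification with $S^\ast S^3$ to the cotangent lift of a weight-$(1,q)$ lens action and not that of some $L^3_p(q')$ with $q'\not\equiv\pm q^{\pm1}\bmod p$. This is where essentially all the content of the proposition lives: every parallelogram of area $p$ gives \emph{some} free $\Z_p$-quotient of $S^2\times S^3$, and the whole point is to pin down $q$. In your setup the verification amounts to writing the generator as $M\mapsto gMh$ with $g=\operatorname{diag}(\zeta^{q-1},1)$, $h=\operatorname{diag}(\zeta^{-q},\zeta)$, rescaling $g,h$ into $\SU(2)$ (the scalars cancel), and translating the resulting isometry $x\mapsto axb^{-1}$ of the footpoint sphere $S^3=\SU(2)$ back into $U(2)$-coordinates -- which is precisely the identity
\[
e^{-2\pi iq/p}z_1+e^{2\pi i/p}z_2\,j \;=\; e^{-\pi i(q-1)/p}\,(z_1+z_2\,j)\,e^{-\pi i(q+1)/p}
\]
that the paper uses, together with a check (which you correctly anticipate is the delicate point) that the footpoint projection you use is the one intertwined with the $\Z_p$-action. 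Without carrying this out, the argument establishes only that $M_D$ is the unit cosphere bundle of a lens space of order $p$, not of $L^3_p(q)$ itself; with it, your proof closes and is a legitimate alternative to the one in the paper.
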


\begin{remark}
Note that any parallelogram is $\textup{Aff}(2, \Z)$-equivalent to one of the diagrams considered in the theorem, 
so this shows that any toric diagram which is a parallelogram corresponds to $S^\ast L$ for some lens space $L$. 
The order $p$ of the fundamental group $p$ is given by the area of the toric diagram.
\end{remark}

\begin{proof}
The symplectization $W$ of $M$ is the toric symplectic cone determined by the good moment cone $C\subset\R^3$
with normals
\[
\nu_1=(q+1, p, 1)\,,\ \nu_2=(0, 0, 1)\,,\ \nu_3=(1, 0, 1)\quad\text{and}\quad \nu_4=(q, p, 1)\,.
\]
More precisely, it is the symplectic reduction of $(\C^4 \setminus\{0\}, \omega_{\rm st}, X_{\rm st})$ with respect
to the naturally induced action of $K\subset \T^4$, with $K := \ker \beta$ where $\beta: \T^4\to \T^3$ is represented 
by the matrix (cf.~(\ref{eq:defBeta}))
\[
\begin{bmatrix}
q+1 & 0 & 1 & q \\ 
p   & 0 & 0 & p \\ 
1   & 1 & 1 & 1
\end{bmatrix}.
\]
Hence we have that 
\[
K \cong \left\{\left(e^{it}, e^{it+\frac{2\pi i(q-1)k}{p}}, e^{-it-\frac{2\pi iqk}{p}}, e^{-it+\frac{2\pi i k}{p}}\right): t\in \R, k\in \Z\right\}
=S^1\times \langle \xi\rangle
\]
where $\xi=\left(1, e^{\frac{2\pi i(q-1)}{p}}, e^{-\frac{2\pi iq}{p}}, e^{\frac{2\pi i}{p}}\right)$ (notice that 
$\langle \xi \rangle\cong \Z_p$). This means that
\[
W = \{(z_1, z_2, z_3, z_4)\in \C^4 \setminus\{0\}: |z_1|^2+|z_2|^2 - |z_3|^2 - |z_4|^2= 0\} / (S^1\times \langle \xi \rangle)
\]
and $M=(S^3\times S^3)/(S^1\times \langle \xi \rangle)$ where  
\[
S^3\times S^3 = \{(z_1, z_2, z_3, z_4)\in \C^4: |z_1|^2+|z_2|^2=|z_3|^2+|z_4|^2=1\}\subset \C^4
\]
and 
\[
S^1=\{\left(e^{it}, e^{it}, e^{-it}, e^{-it}\right): t\in \R\}
\]
acts naturally on $S^3\times S^3 \subset \C^2 \times \C^2$.
By identifying $\C^2$ with the quaternions $\HH$ via $(z_1, z_2)\to z_1+jz_2$ we can also look at $S^3$ as the unit 
quaternions, giving $S^3$ a Lie group structure. Notice that the maps $(z_1, z_2, z_3, z_4)\to (z_1 : z_2) \in \C P^1\cong S^2$ 
and 
\[
(z_1, z_2, z_3, z_4)\to (z_1+jz_2)(z_3+z_4j)=(z_1z_3-\overline z_2 \overline z_4)+(z_1z_4+\overline z_2 \overline z_3)j
\in S^3\subseteq \HH
\]
are both invariant under the action of $S^1$. 
\begin{lemma}
The map $\psi: S^3\times S^3\to S^3 \times \C P^1 \cong S^3 \times S^2$ defined by
\[
(z_1, z_2, z_3, z_4)\to ((z_1z_3-\overline z_2 \overline z_4, z_1z_4+\overline z_2 \overline z_3), (z_1 : z_2))
\]
induces a diffeomorphism $(S^3\times S^3)/S^1\cong S^3 \times \C P^1$.
\end{lemma}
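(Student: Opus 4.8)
The plan is to verify directly that $\psi$ descends to the quotient and then exhibit an explicit inverse. First I would check $S^1$-invariance of all three components of $\psi$. The third component $z_1/z_2 \in \C P^1$ is manifestly invariant since both $z_1$ and $z_2$ are multiplied by $e^{it}$ (the $S^1$-action sends $(z_1,z_2)\mapsto(e^{it}z_1, e^{it}z_2)$ and $(z_3,z_4)\mapsto(e^{-it}z_3,e^{-it}z_4)$). For the first two components, note that under the quaternionic identification they are packaged as the product $(z_1+jz_2)(z_3+z_4j)$, and under the $S^1$-action this becomes $(e^{it}(z_1+jz_2))(e^{-it}(z_3+z_4j))$; since $e^{it}$ is a unit complex number it commutes past $j$ as $j e^{it}=e^{-it}j$, so one must confirm that the two phase factors cancel correctly to leave the product unchanged. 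This is the computation already recorded in the excerpt, so invariance holds and $\psi$ factors through a smooth map $\bar\psi: (S^3\times S^3)/S^1 \to S^3\times \C P^1$.

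Next I would construct an explicit inverse. Given a target point $(w, [a:b]) \in S^3 \times \C P^1$, I want to recover a preimage $[(z_1,z_2,z_3,z_4)]$ up to the $S^1$-action. The idea is that $[a:b]$ fixes the ratio $z_1/z_2$, hence fixes $(z_1,z_2)\in S^3$ up to the overall phase $e^{it}$ (using $|z_1|^2+|z_2|^2=1$), and then the quaternion equation $(z_1+jz_2)(z_3+z_4j)=w$ determines $(z_3+z_4j)=(z_1+jz_2)^{-1}w$ uniquely, since $z_1+jz_2$ is a unit quaternion and hence invertible; moreover the resulting $(z_3,z_4)$ automatically lies on $S^3$ because $w$ and $z_1+jz_2$ both have unit norm and quaternionic norm is multiplicative. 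The only ambiguity in reconstructing $(z_1,z_2)$ from the ratio is exactly the $e^{it}$ phase, and replacing $z_1+jz_2$ by $e^{it}(z_1+jz_2)$ forces $z_3+z_4j$ to be replaced by $e^{-it}(z_3+z_4j)$, which is precisely the $S^1$-orbit; so the inverse is well defined on $(S^3\times S^3)/S^1$.

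I would then verify that the two composites are the identity. One direction is immediate from the construction. For the other, starting from a class $[(z_1,z_2,z_3,z_4)]$, applying $\psi$ and then the reconstruction recovers $(z_1,z_2)$ up to phase and recovers $z_3+z_4j = (z_1+jz_2)^{-1}(z_1+jz_2)(z_3+z_4j)$ exactly, giving back the same $S^1$-orbit. Smoothness of the inverse follows since inversion and multiplication in the Lie group $S^3$ are smooth, the normalization of $(z_1,z_2)$ from a point of $\C P^1$ is smooth away from the phase ambiguity (and that ambiguity is exactly what the quotient absorbs), so $\bar\psi$ is a diffeomorphism.

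The main obstacle I anticipate is bookkeeping the noncommutativity of $\HH$: the phase $e^{it}$ must be tracked carefully through the relation $j\,e^{it}=e^{-it}j$, and one has to confirm that the cancellation of phases in the product $(z_1+jz_2)(z_3+z_4j)$ is genuine rather than apparent. Once the quaternionic algebra is organized so that the $S^1$-action corresponds to left multiplication of $z_1+jz_2$ by $e^{it}$ and right multiplication of $z_3+z_4j$ by $e^{-it}$, the map $\psi$ is visibly the product map $S^3\times S^3 \to S^3$ twisted by the quotient, and invertibility of unit quaternions does the rest.
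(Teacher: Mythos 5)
Your proposal is correct and follows essentially the same route as the paper: you recover $(z_1,z_2)$ up to phase from the $\C P^1$ coordinate and then solve $z_3+z_4j=(z_1+jz_2)^{-1}x$ using invertibility of unit quaternions, which is exactly the paper's verification that $\psi$ is surjective with $S^1$-orbits as fibers. One bookkeeping correction: under the identification the circle acts by \emph{right} multiplication by $e^{it}$ on $z_1+jz_2$ and by \emph{left} multiplication by $e^{-it}$ on $z_3+z_4j$ (not the other way around, as you wrote) --- this is precisely why the two phases meet in the middle of the product and cancel, whereas with your stated convention the product would only be conjugated, not fixed.
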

\begin{proof}
It suffices to show that $\psi$ is surjective with $S^1$-orbits as fibers.

\noindent \textit{Surjectivity.} 
Let $(x, z)\in S^3 \times \C P^1$. If we choose $z_1, z_2$ such that $|z_1|^2+|z_2|^2=1$ and $z_1/z_2=z$ and define 
$z_3, z_4$ by $z_3+z_4j=(z_1+jz_2)^{-1} x$ we get $\psi(z_1, z_2, z_3, z_4)=(x, z)$.

\noindent \textit{Fibers.} 
Suppose that $\psi(z)=\psi(z')$. From $z_1/z_2=z_1'/z_2'$ follows that there is $t\in \R$ such that $z_1'=e^{it}z_1$ and 
$z_2'=e^{it}z_2$. Moreover,
\[
\left(z_1+jz_2\right)\left(z_3+z_4j\right)=\left(z_1'+jz_2'\right)\left(z_3'+z_4'j\right)=\left(z_1+jz_2\right)\left(e^{it}z_3'+e^{it}z_4'j\right).
\]
Thus $z_3=e^{it}z_3'$ and $z_4=e^{it}z_4'$; that is, $(z_1', z_2', z_3', z_4')=(e^{it}z_1, e^{it}z_2, e^{-it}z_3, e^{-it}z_4)$, 
so $z$ and $z'$ are in the same $S^1$-orbit.\qedhere
\end{proof}
\noindent One can easily see that the $\langle\xi\rangle$-action on $S^3 \times S^3$ descends via $\psi$ to a 
$\langle\txi\rangle$-action on $S^3 \times \C P^1$ with
\[
\tilde \xi \cdot (z_1, z_2, z)=\left(e^{-\frac{2\pi iq}{p}}z_1, e^{\frac{2\pi i}{p}}z_2 , e^{-\frac{2\pi i(q-1)}{p}}z \right)
\]
for $(z_1, z_2)\in S^3$ and $z\in \C P^1$. 

Since $S^3$ is a Lie group, the unit sphere bundle $SS^3$ admits a trivialization $\varphi: SS^3\to S^3\times S^2$
and $S^3\times S^2$ can be identified with $S^3 \times \C P^1$ via $\textup{id}_{S^3}\times \textup{st}$, where 
$\textup{st}: S^2\to \C P^1$ is the standard identification between $S^2 \cong \R^2 \cup \{\infty\}$ and
$\C P^1 \cong \C \cup \{\infty\}$. We will show that, under these identifications, the 
$\langle\txi\rangle$-action on $S^3 \times \C P^1$ can be seen as a $\langle dg\rangle$-action on $SS^3$,
where $dg$ is the natural lift to $SS^3$ of the $\langle g \rangle$-action on $S^3$ determined by 
\[
g \cdot (z_1, z_2)=\left( e^{-\frac{2\pi iq}{p}}z_1, e^{\frac{2\pi i}{p}}z_2\right)\,.
\]
More precisely, we will prove that the diagram
\begin{center}
\begin{tikzcd}
S^3 \times \C P^1 \arrow[r, "\tilde \xi"]
&S^3 \times \C P^1\\
S^3\times S^2 \arrow[r, "g \times r_{2\alpha}"] \arrow[u, "\textup{id}\times \textup{st}"]
& S^3\times S^2 \arrow[u, "\textup{id}\times \textup{st}"] \\
SS^3 \arrow[r, "dg"] \arrow[u, "\varphi"]
& SS^3 \arrow[u, "\varphi"] \\
\end{tikzcd}
\end{center}
commutes, where $r_{2\alpha}$ is a certain rotation of $S^2$ with angle $2\alpha$ (see below).
It will then follow that
\[
M=(S^3\times S^2)/\langle\tilde \xi\rangle\cong SS^3/\langle dg\rangle\cong S\left(S^3/\langle g \rangle\right)\cong 
S L_p^3(q)\cong L_p^3(q) \times S^2
\]
as desired.

To show that the diagram commutes,  consider again $S^3\subset \HH$ as the unit quaternions. Since
\[
e^{-\frac{2\pi iq}{p}}z_1+e^{\frac{2\pi i}{p}}z_2j=e^{-\frac{\pi i(q-1)}{p}}(z_1+z_2j)e^{-\frac{\pi i(q+1)}{p}}
\]
the $\langle g \rangle$-action on $S^3$ can be written as
\[
g\cdot x=e^{i\alpha} x e^{i\beta}\,, \ x\in S^3\subset \HH\,,
\]
where $\alpha=-\frac{\pi(q-1)}{p}$, $\beta=-\frac{\pi(q+1)}{p}$. We can now think of the tangent space 
$S_x S^3$ at $x$ as the set of unit quaternions $v\in \HH$ perpendicular to $x$. Thus $SS^3$ consists 
of pairs of orthogonal quaternions $(x,v)\in \HH\times \HH$ such that $|x|=|v|=1$. We have a trivialization 
\begin{equation} \label{eq:varphi}
\varphi: SS^3\to S^3 \times S_e S^3\cong S^3\times S^2 
\quad\text{given by} \quad \varphi(x,v)=(x, vx^{-1})\,.
\end{equation} 
Here $S_e S^3$ is the unit sphere bundle at the identity of $S^3$, that is,
\[
S_e S^3=\{ai+bj+ck: a^2+b^2+c^2=1\}\cong S^2.
\]
For $(x,v)\in SS^3$ we have that
\[
\varphi ((dg)\cdot (x,v)) = \varphi \left( e^{i\alpha} x e^{i\beta}, e^{i\alpha} v e^{i\beta} \right) =
\left( e^{i\alpha}xe^{i\beta} , e^{i\alpha}vx^{-1}e^{-i\alpha}\right) 
= \left( g\cdot x,  r_{2\alpha}(vx^{-1})\right)\,,
\]
where $r_{2\alpha}(w)=e^{i\alpha}we^{-i\alpha}$ is the rotation of $S^2=S_eS^3$ with angle $2\alpha$ 
about the $i$-axis. This shows that the down square of the diagram commutes. 

Consider now the $\langle\txi\rangle$-action on $S^3\times S^2$. It is clear that on the first coordinate 
$\txi$ acts as $g$. If we give spherical coordinates $(\theta, \phi)$ to $S^2$ with respect to the $i$-axis, 
then the stereographic projection $\textup{st}: S^2\to \C P^1$ from $i$ is given by 
$\textup{st}(\theta, \phi)=\cot(\theta/2)e^{i\phi}$ and the rotation around the $i$-axis with angle $2\alpha$ is given by 
$r_{2\alpha}(\theta, \phi)=(\theta, \phi+2\alpha)$. Therefore
\[
e^{2i\alpha} \cdot \textup{st}(\theta, \phi)=\cot(\theta/2)e^{i(\phi+2\alpha)}= 
\textup{st}(\theta, \phi+2\alpha)=\textup{st}(r_{2\alpha}(\theta, \phi)).
\]
From this we can conclude that the top square commutes.\qedhere
\end{proof}

We will now prove that the toric contact structures on $L^3_p (q) \times S^2$ are indeed the standard contact 
structures on the unit cosphere bundles $S^\ast L^3_p (q)$. Let us start with the case $p=1$ and $q=0$, i.e. when
the toric diagram is 
$$D=\textup{conv}((0,0), (1,0), (0,1), (1, 1))\,.$$
The corresponding moment cone in $\R^3$ has
normals
\[
\nu_1=(1, 1, 1)\,,\ \nu_2=(0, 0, 1)\,,\ \nu_3=(1, 0, 1)\quad\text{and}\quad \nu_4=(0, 1, 1)\,.
\]
Via the $SL(3, \Z)$ transformation 
\[
\begin{bmatrix}
1 & 1 & -1 \\ 
-1   & 1 & 0 \\ 
0  & -1 & 1 \end{bmatrix}\,,
\]
this cone is equivalent to the one with normals 
\[
\tnu_1=(1, 0, 0)\,,\ \tnu_2=(-1, 0, 1)\,,\ \tnu_3=(0, -1, 1)\quad\text{and}\quad \tnu_4=(0, 1, 0)\,,
\]
which is the cone over the Delzant polytope $[0,1]\times [0,1]$. This means that $(M_D, \xi_D)$ is the prequantization
of $(B=S^2\times S^2, \omega = \sigma_1 + \sigma_2)$, where $\sigma_i = \pi_i^\ast (\sigma)$ with
$\pi_i : S^2 \times S^2 \to S^2$, $i=1,2$, the factor projections and $\int_{S^2} \sigma = 2\pi$.
This prequantization is well known to be contactomorphic to $S^\ast S^3$ with its canonical contact structure, but
we present here a proof for completeness.
\begin{proposition}
The prequantization of $(S^2\times S^2, \sigma_1+\sigma_2)$ is $S^\ast S^3$ with its canonical 
contact structure.
\end{proposition}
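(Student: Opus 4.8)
The plan is to show that the canonical contact structure on $S^\ast S^3$ is \emph{itself} a prequantization contact structure, by exhibiting the cogeodesic (= Reeb) flow of the round metric as a free $S^1$-action whose base is $(S^2\times S^2,\sigma_1+\sigma_2)$. First I would equip $S^3$ with its bi-invariant round metric and use it to identify $S^\ast S^3$ with the unit tangent bundle, which by~(\ref{eq:varphi}) is $S^3\times S^2$, where $S^2 = S_e S^3$ is the set of unit imaginary quaternions. Writing a point as $(x,u)$ with $u = vx^{-1}$, the great circle through $x$ with direction $v = ux$ is $t\mapsto e^{tu}x$, so the geodesic flow reads
\[
\Phi_t(x,u) = (e^{tu}x,\,u)\,.
\]
Since $u$ is a unit imaginary quaternion, $e^{tu} = \cos t + u\sin t$, so $\Phi_t$ is $2\pi$-periodic and free (at $t=\pi$ one has $e^{\pi u}x = -x\neq x$). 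Thus the Reeb flow of the canonical contact form $\lambda$ generates a free $S^1$-action $\pi\colon S^\ast S^3\to Q := S^\ast S^3/S^1$.

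Next I would record that $\lambda$ is a principal connection form for this action: it is $S^1$-invariant and evaluates to $1$ on the infinitesimal generator (the Reeb field), so $d\lambda = \pi^\ast\omega$ for a symplectic form $\omega$ on $Q$, and $(S^\ast S^3,\ker\lambda)$ is by definition the prequantization of $(Q,\omega)$. It then remains to identify $(Q,\omega)$ with $(S^2\times S^2,\sigma_1+\sigma_2)$. For the underlying space I would use the map
\[
S^3\times S^2 \longrightarrow S^2\times S^2\,,\qquad (x,u)\longmapsto (u,\,x^{-1}ux)\,,
\]
which is $S^1$-invariant (both coordinates are unchanged under $x\mapsto e^{\theta u}x$, since $u$ commutes with $e^{\theta u}$) and descends to a diffeomorphism $Q\cong S^2\times S^2$: any two unit imaginary quaternions are conjugate in $S^3$, and the set of $x$ solving $x^{-1}ux = w$ is exactly one orbit of the quotienting circle (the centralizer of $u$).

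To pin down $\omega$ I would use symmetry. The whole construction is equivariant for $SO(4)\cong(S^3\times S^3)/\{\pm1\}$ acting by left and right quaternion multiplication, and under the identification above a pair $(p,q)$ sends $u\mapsto pup^{-1}$ and $x^{-1}ux\mapsto q(x^{-1}ux)q^{-1}$, i.e. the two factors act by rotations on the two $S^2$ factors. Hence $\omega$ is $SO(3)\times SO(3)$-invariant, so $\omega = a\,\sigma_1 + b\,\sigma_2$ for the two area forms; the orientation-reversing symmetry $x\mapsto\bar x$ interchanges left and right multiplication and hence the two factors, forcing $a=b$. Finally the Chern class of $\pi$ is $(1,1)$ — restricting the bundle to either $S^2$ factor gives the Hopf fibration $S^3\to S^2$ — so $[\omega]/2\pi = (1,1)$ relative to the normalization $\int_{S^2}\sigma = 2\pi$, giving $a=b=1$ and $\omega = \sigma_1+\sigma_2$.

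The hard part will be this last step: matching the descended curvature $\omega$ with $\sigma_1+\sigma_2$ on the nose, including the Chern-class computation and the orientation and normalization bookkeeping that rules out $\omega$ being merely cohomologous to, rather than equal to, $\sigma_1+\sigma_2$. The symmetry argument is precisely what makes the identification rigid rather than cohomological; the remaining care is in checking that the two invariant area classes are indeed the generators dual to the $S^2$ factors and that the induced orientations make both degrees $+1$.
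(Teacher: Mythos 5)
Your proposal is correct and follows essentially the same route as the paper: identify $S^\ast S^3$ with $S^3\times S^2$ via the quaternionic trivialization $\varphi(x,v)=(x,vx^{-1})$, realize the geodesic flow as a free $S^1$-action with quotient map $(x,v)\mapsto(vx^{-1},x^{-1}v)$, and use $SO(3)\times SO(3)$-invariance of the curvature to reduce to $\omega=\lambda_1\sigma_1+\lambda_2\sigma_2$. The only difference is the final normalization, where you read off $\lambda_1=\lambda_2=1$ from the restricted bundles being Hopf fibrations (with the orientation bookkeeping you correctly flag as the remaining care), whereas the paper computes $\frac{1}{2\pi}\int_{S^2\times\{i\}}\omega$ directly by Stokes over an explicit disk whose boundary is a $2\pi$-periodic Reeb orbit.
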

\begin{proof}
We can use the bi-invariant metric $\langle \cdot,\cdot\rangle$ on $S^3$ to identify $S^\ast S^3$ with $SS^3$. This
provides $SS^3$ with a contact form $\alpha$ which is given by $\alpha_{(x,v)}(U)=\langle pr_\ast U,v\rangle_x$ for 
$(x,v)\in SS^3$ and $U\in T_{(x,v)}SS^3$ where $pr : SS^3\to S^3$ is the canonical projection. 
The Reeb flow for this contact form is the geodesic flow on $SS^3$.

Consider the diffeomorphism $\varphi: S S^3\to S^3\times S^2$ defined by~(\ref{eq:varphi}). Since the usual 
Riemannian metric of $S^3$ is bi-invariant for the Lie group structure on $S^3$, under this diffeomorphism the 
geodesic flow is given by $(t, (x, w))\mapsto (x\exp(tw),w)$; note that $t\mapsto \exp(tw)$ has period $2\pi$ 
for every $w\in S^2$, so the geodesic flow gives a $S^1$-action on $SS^3$. We can check that this action gives a 
principal $S^1$-bundle $\pi: SS^3\to S^2\times S^2$ where $\pi$ is defined by $\pi(x,v)=(vx^{-1}, x^{-1}v)$. 

We want to show that the curvature form $\omega\in \Omega^2(S^2\times S^2)$ of this bundle for the connection 
determined by the contact form $\alpha$ is $\sigma_1 + \sigma_2$; recall that $\omega$ is determined by the 
formula $d\alpha=\pi^\ast \omega$. 

We begin by showing that $\omega$ is invariant for the $SO(3)\times SO(3)$-action on $S^2\times S^2$. 
First, we note that $S^3\times S^3$ acts on $SS^3$ via left and right multiplication, that is,
\[
(g_1,g_2)\cdot (x,v)=(dL_{g_1})(dR_{g_2})^{-1}(x,v)=( g_1xg_2^{-1},  g_1vg_2^{-1} )\,.
\]
Moreover, it acts on $S^2\times S^2$ via conjugation on both coordinates:
\[
(g_1, g_2)\cdot (w_1, w_2)=(g_1w_1g_1^{-1}, g_2w_2g_2^{-1})\,.
\]
We can easily check that $\pi: SS^3\to S^2\times S^2$ is equivariant with respect to these actions. Thus, if we show 
that $\alpha$ is invariant with respect to the $S^3\times S^3$ action on $SS^3$ it follows that $\omega$ is invariant 
with respect to the $S^3\times S^3$ action on $S^2\times S^2$ which descends to the usual $SO(3)\times SO(3)$-action 
by rotations on each component. Indeed, we have:
\begin{align*}
[(dL_{g_1})^\ast \alpha]_{(x,v)}(U) & =\alpha_{(g_1x,g_1v)}((dL_{g_1})_\ast U) =
\langle g_1v, pr_\ast(dL_{g_1})_\ast U\rangle_{g_1x} \\
& = \langle g_1v, g_1 pr_\ast U \rangle_{g_1x} = \langle v, pr_\ast U\rangle_x =\alpha_{(x,v)}(U)\,.
\end{align*}
Hence $(dL_{g_1})^\ast \alpha=\alpha$. Note that we used that $pr\circ dL_{g_1}=L_{g_1}\circ pr$ and that the metric 
$\langle\cdot,\cdot\rangle$ is left-invariant. Similarly $(dR_{g_1})^\ast \alpha=\alpha$, so the claim follows. 

Now, since the $SO(3)\times SO(3)$-action is transitive, it is clear that this action preserves $\omega$ if and only if 
$\omega$ has the form $\lambda_1 \sigma_1 + \lambda_2 \sigma_2$ for some $\lambda_1, \lambda_2\in \R$. 
We compute $\lambda_1$ by computing the integral of $\omega$ on the sphere $S^2\times \{i\}$. We can check that 
$\pi$ maps the disk
\[
D=\{(x, xi)\in SS^3: x=a+bj+ck\textup{ with } a^2+b^2+c^2=1, a\geq 0\}\subseteq SS^3
\]
onto $S^2\times \{i\}$ and is injective in $D\setminus \partial D$ (and collapses the boundary to the point $(-i, i)$). 
Hence
\[
\lambda_1= \frac{1}{2\pi} \int_{S^2\times \{i\}}\omega= \frac{1}{2\pi} \int_{D} \pi^\ast \omega= \frac{1}{2\pi} \int_{\partial D} \alpha
\]
by Stokes' theorem and $\pi^\ast \omega=d\alpha$. But $\partial D$ is the closed orbit of the Reeb flow (since it is 
mapped by $\pi$ to a point) and the closed orbits of the Reeb flow have period $2\pi$, hence 
$\lambda_1= \frac{1}{2\pi} \int_{\partial D}\alpha=1$. 
Similarly $\lambda_2=1$ and thus $\omega= \sigma_1 + \sigma_2$.
\qedhere
\end{proof}
Now we prove that in general the contact structure on $M_D\cong SL$ is the canonical one. The inclusion 
$\ker \beta_D\subseteq \ker \beta_Q$ induces a projection 
$$S^3\times S^2\cong M_Q\to M_D\cong S^3\times S^2/\langle \widetilde \xi\rangle$$ 
which by naturality of the Delzant construction is a contact transformation (and it is also a local diffeomorphism). 
This projection fits in the following commutative diagram:
\begin{center}
\begin{tikzcd}
SS^3\arrow[d, "\varphi"] \arrow[r, "d \overline{p}"]&
S L^3_p(q)  \arrow[d, "\overline{\varphi}"]\\
M_Q \arrow[r]&
M_D
\end{tikzcd}
\end{center}
where $\overline{p}: S^3\to L^3_p (q)$ is the quotient projection. We already showed that $\varphi$ is a contactomorphism, 
$d \overline{p}$ is also a contact transformation and a local diffeomorphism. Hence it follows that $\overline \varphi$ 
is a contactomorphism. That is, we just showed the following:

\begin{theorem} \label{thm:lensDiag}
The contact manifold $S^\ast L^3_p (q)$ is a Gorenstein toric contact manifold with corresponding toric 
diagram
\[
\textup{conv}((0,0), (1,0), (q,p), (q+1, p)).
\]
\end{theorem}

Moreover, we can understand the action of $T^3$ on $S^\ast L^3_p (q)$. For $L^3_1 (0) =S^3$, this action is the action of 
$T^3=T^2\times S^1$ where $S^1$ acts on $S^\ast S^3$ by the geodesic flow and the action of $T^2$ is 
the lift of the usual action of $T^2$ on $S^2\times S^2$ via $\pi: S^\ast S^3\to S^2\times S^2$. But the later is 
easily seen to be induced by the action of $T^2$ on $S^3$ (the same giving $S^3$ a toric structure). Explicitly, 
identifying $S^\ast S^3$ with $S^3 \times S^2$ as before, the action is given by
\[
\left(e^{i\alpha}, e^{i\beta}, e^{i\gamma}\right)\cdot (x,w) = 
\left( e^{i\beta}e^{\gamma w} x e^{i\alpha} , e^{i\beta}we^{-i\beta} \right).
\]

In general, we have an action of $\widetilde T^2\times S^1$ on $S^\ast L^3_p (q)$ where $\widetilde T^2=T^2/(\Z_p)$ 
still comes from the usual toric structure on $L^3_p (q)$ and $S^1$ still is the geodesic flow. Note that when $p>1$ 
the geodesic flow action is no longer free.

\begin{remark} \label{rmk:notToric}
The contact manifold $S^\ast S^{n+1}$, or more generally $S^\ast L^{n+1}$, is not toric when $n>2$. 
In fact, $H^2(S^\ast S^{n+1}, \R)=0$ for $n>2$ and, by~\cite{Le2}, this implies that if $S^\ast S^{n+1}$ was toric
then its moment cone $C\subset\R^{n+1}$ would have $d=n+1$ facets. Any such moment cone gives rise to a toric
contact manifold diffeomorphic to $S^{2n+1}/\ker \beta$, where $\ker \beta$ is a finite group (cf.~(\ref{eq:defBeta})).
Since $S^\ast S^{n+1}$ is simply connected, this would be possible only if $S^\ast S^{n+1}$ was diffeomorphic to 
$S^{2n+1}$, which clearly is not the case since they have different cohomology types. 
\end{remark}

\subsubsection{Higher dimensional lens spaces} \label{ss:higherlens}

We will now consider higher dimensional lens spaces, i.e. quotients of odd dimensional spheres by cyclic groups. 
More concretely, regarding $S^{2n+1}$ as a subset of $\C^{n+1}\setminus \{0\}$ we have an action of $\Z_p$ 
generated by 
\[
[1].(z_0, \ldots, z_n)=\left(e^{\frac{2\pi i \ell_0}{p}}z_0, e^{\frac{2\pi i \ell_1}{p}}z_1, \ldots, e^{\frac{2\pi i \ell_n}{p}}z_n \right)
\]
where $\ell_0, \ldots, \ell_n$ are integers called the weights of the action. Such an action is free when the weights 
are coprime with $p$ and in that case we have a lens space obtained as the quotient of $S^{2n+1}$ by the action of 
$\Z_p$. We denote such a lens space by
\[
L_p^{2n+1}(\ell_0, \ell_1, \ldots, \ell_n)\,.
\]
Note that different sequences of weights might produce diffeomorphic lens spaces, for instance by permuting the weights, 
multiplying every weight by some $k$ coprime with $p$ and changing the sign of some weights. Moreover these are the 
only possibilities leading to diffeomorphic lens spaces (see section 12 of~\cite{M}).

Suppose now that $L^{2n+1}$ is a lens space admitting a toric contact structure. We have that $H^2(L, \R)=0$,
since $H^2(L; \Z)\cong \Z_p$, which again implies that the moment cone of $L$ has $d=n+1$ facets (see Remark~\ref{rmk:notToric}). 
Restricting ourselves
to Gorenstein lens spaces, Proposition~\ref{prop:c_1} implies that the moment cone of the symplectization of $L$ must be
$SL(n+1,\Z)$ equivalent to a cone $C\subset\R^{n+1}$ with defining normals
\[
\nu_0=\begin{bmatrix}
0\\ 
0\\
\vdots\\ 
0\\ 
0\\ 
1
\end{bmatrix},\,\,
 \nu_j=\begin{bmatrix}
0\\ 
\vdots\\ 
1\\ 
0\\
\vdots\\
1
\end{bmatrix}\textup{ for }j=1, \ldots, n-1,\,\, \nu_n=\begin{bmatrix}
\alpha_1\\ 
\alpha_2\\
\vdots\\ 
\alpha_{n-1}\\ 
p\\ 
1
\end{bmatrix}\,,\ \alpha_1,\ldots,\alpha_{n-1} \in\Z\,,\ p\in\N\,.
\]
The system $\beta(x)=0$, with $\beta$ given by~(\ref{eq:defBeta}), can be written as
\[
\begin{cases}
x_1+\alpha_1 x_{n}=0\\ 
\,\,\,\,\,\,\,\,\,\vdots\\ 
x_{n-1}+\alpha_{n-1} x_{n}=0\\ 
px_{n}=0\\ 
\sum_{i=0}^{n}x_i=0
\end{cases}
\]
which has $p$ solutions in $\T^{n+1}$, generated by 
\[
\left(\frac{\alpha_0}{p}, -\frac{\alpha_1}{p}, \ldots, -\frac{\alpha_{n-1}}{p}, \frac{1}{p}\right)\in \T^{n+1}\,,
\text{with}\ \alpha_0:=\alpha_1+\ldots+\alpha_{n-1}-1\,.
\]
Hence the corresponding contact manifold is 
\[
L_p^{2n+1}(\alpha_0, -\alpha_1, \ldots, -\alpha_{n-1}, 1)\,.
\]
It is not hard to see that $C$ is a good cone if and only if $\gcd (\alpha_j, p)=1$ for $j=0,1, \ldots, n-1$.
Note that when $n$ is even this implies in particular that $p$ must be odd.

\section{Conley-Zehnder index of non-contractible closed orbits}
\label{s:trivialization}

Let $(M^{2n+1},\xi)$ be a good toric contact manifold and $(W^{2(n+1)}, \omega, X)$ its associated toric
symplectic cone, obtained via symplectic reduction of $(\C^d \setminus\{0\} \cong \R^{2d} \setminus\{0\},
\omega_{\rm st}, X_{\rm st})$ by the linear action of a subtorus $K\subset\T^d$ (see~(\ref{eq:defK})).
Denote by $F: \C^d \setminus\{0\} \to \fk^\ast$ the associated moment map, so that $W = Z/K$ with
$Z:= F^{-1} (0)$.

Let $\alpha$ be a toric contact form supporting $\xi$. As discussed in \cite[section 3]{AM2} (c.f. \cite{Esp,McL}) 
and assuming that $c_1(\xi)=0$, the index of every closed orbit of $\alpha$ can be defined using a trivialization 
of the determinant line bundle $\Lambda_\C^n\xi$. In this section, we will show how to compute this index for 
non-contractible orbits using the above symplectic reduction description. 

In order to do this, let us first explain how we dealt with contractible orbits in \cite{AM1}. Consider a periodic orbit 
$\ga$ of $\alpha$. If $\ga$ is contractible then it admits a lift to a closed orbit $\hga$ in $Z$ of a suitable linear 
unitary Hamiltonian flow on $\C^d$ and \cite[Lemma 3.14]{AM1} establishes that the Robbin-Salamon index of 
$\hga$ with respect to the usual (constant) symplectic trivialization of 
$T\C^d$ equals the Robbin-Salamon index of $\ga$ with respect to the trivialization induced by a capping disk. 
(The Robbin-Salamon index \cite{RS} coincides with the Conley-Zehnder whenever the periodic orbit is non-degenerate.)

Since $Z$ is simply connected (c.f. \cite[section 3]{AM1}), this lift exists if, and only if, $\ga$ is contractible. 
Thus, if $\ga$ is not contractible we do not have a closed lift  and need to consider an arc of trajectory in $\C^d$. 
More precisely, let $m$ be the smallest positive integer such that $\ga^m$ is contractible (recall that the 
fundamental group of $M$ is finite cyclic) and let $\hga$ be a closed lift of $\ga^m$ as before. Suppose, 
without loss of generality, that $\ga^m$ has period one so that $\ga$ has period $1/m$. Consider the arc 
$\bga:=\hga|_{[0,1/m]}$. We can compute the index of $\bga$ using the constant trivialization of $T\C^d$ 
but it turns out that, in general, this index is not equal to the index of $\ga$ if $m>1$. We need to ``correct'' 
the symplectic path along $\bga$ as stated in the next proposition.

Before we give the precise statement, let us introduce some terminology. Let $g$ be an element of $K$ 
such that 
\begin{equation} \label{def:g}
\bga(1/m)=g\bga(0)\quad\text{and}\quad g^m=\id\,. 
\end{equation}
Since $c_1(\xi)=0$ every element of $K$ is in $\SU(d)$.
Let $\psi: [0,1/m] \to \SU(d)$ be a smooth map such that $\psi(0)=\id$ and $\psi(1/m)=g$. Denote by 
$\vr^H_t$ the Hamiltonian flow on $\C^d$ for which $\bga$ is an arc of trajectory. As mentioned before, 
$\vr^H_t$ is a linear unitary flow.

\begin{proposition} \label{prop:index}
The Robbin-Salamon index of $\psi(t)^{-1}\circ \vr^H_t$ with respect to the constant symplectic trivialization 
of $T\C^d$ is equal to the Robbin-Salamon index of $\ga$ defined using a trivialization of the determinant 
line bundle $\Lambda_\C^n\xi$. 
\end{proposition}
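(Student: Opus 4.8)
The plan is to adapt the capping-disk argument of \cite[Lemma 3.14]{AM1} to the case of an arc, isolating the single new feature — the nontrivial monodromy $g$ — and showing that it is exactly cancelled by the correction factor $\psi(t)^{-1}$. First I would recall how the relevant trivialization of $\Lambda_\C^n\xi$ arises from the reduction. The constant trivialization of $\Lambda_\C^d(T\C^d)$ is canonical; since $c_1(\xi)=0$ forces $K\subset\SU(d)$, this trivialization is $K$-invariant and therefore descends, after the canonical identification of the orbit, Reeb and Liouville directions, to a trivialization $\tau$ of $\Lambda_\C^n\xi$ over $W$. It is with respect to $\tau$ that the Robbin--Salamon index of $\ga$ is defined, and this is precisely the trivialization that \cite[Lemma 3.14]{AM1} matches, in the contractible case, with the constant trivialization of $T\C^d$ along a closed lift.

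The core of the argument is the comparison of two identifications of $\xi_{\ga(0)}$ with itself around the loop $\ga$. Along the arc $\bga$ the constant trivialization of $\C^d$ provides a frame at every point, but because $\bga(1/m)=g\,\bga(0)$ the frames at the two endpoints are identified, downstairs, only after applying the action of $g$; thus the constant trivialization over the arc induces the loop trivialization $\tau$ over $\ga$ only once this monodromy is taken into account. I would implement the monodromy by transporting $g$ back to $\id$ along $\psi(t)^{-1}$: since the linearized flow along $\bga$ in the constant trivialization is simply the unitary path $\vr^H_t$, the corrected path $\Theta_t:=\psi(t)^{-1}\circ\vr^H_t$ runs from $\Theta_0=\id$ to $\Theta_{1/m}=g^{-1}\vr^H_{1/m}$, and the latter is exactly the return map read in the loop identification. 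Crucially, $\psi(t)$ and $g$ lie in $\SU(d)$, so they fix the trivialization of the determinant line $\Lambda_\C^d\C^d$; hence $\Theta$ computes the index in the $\tau$-trivialization and not in some other phase.

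It then remains to pass from the full $\C^d$ to the contact distribution. Splitting $T\C^d$ symplectically along $\bga$ into $\xi$ and the complementary orbit/Reeb/Liouville directions, and using additivity of the Robbin--Salamon index under symplectic direct sums, I would show that the complementary summand contributes $0$ — along the same lines as the contractible computation in \cite{AM1}, the correction $\psi(t)^{-1}$ being homotopic, on these directions, to a path with no crossings. The surviving summand is the linearized Reeb flow on $\xi$ read in the loop trivialization $\tau$, whose index is by definition the index of $\ga$; this yields that $\rs(\psi(t)^{-1}\circ\vr^H_t)$ equals the index of $\ga$, as claimed. Finally, the statement is independent of the auxiliary choice of $\psi$: any two admissible choices differ by a loop in $\SU(d)$, and since $\pi_1(\SU(d))=0$ they are homotopic rel endpoints, so the homotopy invariance of the Robbin--Salamon index gives the same answer.

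I expect the main obstacle to be the passage from $\C^d$ to $\xi$ described in the third paragraph: verifying that the single correction $\psi(t)^{-1}$, which acts on all of $\C^d$ and hence mixes $\xi$ with the complementary directions, simultaneously closes up the $\xi$-part into the correct loop trivialization $\tau$ and leaves the vanishing contribution of the complementary directions unchanged, without creating spurious crossings. This is where the hypotheses $g\in\SU(d)$ (a consequence of $c_1(\xi)=0$) and $g^m=\id$ (reflecting that $\ga^m$ is the first contractible iterate) are essential, and where a careful homotopy argument together with the additivity of the index must be combined with the reduction picture of \cite{AM1}.
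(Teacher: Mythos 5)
Your overall picture is right, and your first two paragraphs are consistent with the paper's setup: the descent of the constant trivialization does rest on $K\subset\SU(d)$, and independence of the choice of $\psi$ does follow from simple connectedness of $\SU(d)$. But the step you yourself flag as the main obstacle is a genuine gap, and it is exactly the step the paper's proof is built to handle. Your plan is to split $\bga^*T\C^d$ into the $\xi$-directions and a complement and invoke additivity of the Robbin--Salamon index; additivity, however, applies only to paths that are block-diagonal for the splitting, and $\psi(t)^{-1}\circ\vr^H_t$ is not, precisely because $\psi$ mixes the factors. What one actually has to compare are two trivializations of $\bga^*T\C^d$ over the arc: a trivialization $\Phi$ built from the lift of a frame of $\ga^*TW$ adapted to $\Lambda^n_\C\xi$ together with the moment-map directions $X_{F_i},\nabla F_i$ (this one computes $\rs(\ga)$ by construction), and the trivialization $\Psi$ given by $w_i(t)=\psi(t)v_i(0)$ (this one computes $\rs(\psi(t)^{-1}\circ\vr^H_t)$). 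These differ by a loop $A:[0,1/m]\to\Sp(2d)$ based at the identity, with $\rs(\bga;\Phi)=\rs(\bga;\Psi)+2\maslov(A)$, and the whole content of the proposition is that $\maslov(A)=0$; your sketch (``homotopic, on these directions, to a path with no crossings'') does not establish this.

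The paper's mechanism for killing $\maslov(A)$ is the one place where the hypothesis $g^m=\id$ is genuinely used, and your proposal never actually invokes it. One extends both frames equivariantly to the closed lift $\hga$ of $\ga^m$ via $v_i'(t+j/m)=dg^j(v_i(t))$ and $w_i'(t+j/m)=dg^j(w_i(t))$; the extended trivializations $\Phi'$ and $\Psi'$ compute $\rs(\ga^m)$ and $\rs(\hga)$ in the constant trivialization respectively, and these agree by the already-known closed-orbit result of \cite[Lemma 3.4]{AM1}, so the comparison loop $A'$ over $[0,1]$ has $\maslov(A')=0$. Since $A'_{t+j/m}=g^jA_tg^{-j}$ and conjugation acts trivially on $\pi_1(\Sp(2d))$, the loop $A'$ is a concatenation of $m$ loops each homotopic to $A$, whence $\maslov(A')=m\,\maslov(A)$ and therefore $\maslov(A)=0$. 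If you want to salvage your more direct route, you would need to prove, over the arc and relative to its endpoints, that the lifted determinant trivialization coming from $\Lambda^n_\C\xi$ is homotopic to the $\psi$-corrected constant one; this is equivalent to $\maslov(A)=0$ and, as far as I can see, still requires passing to the contractible iterate.
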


\begin{proof}
Let $\pi: Z \to W$ be the quotient projection. 
Choose a basis of the dual Lie algebra $\fk^\ast$ of $K$ and let $F_1,\dots,F_{d-n-1}$ be the corresponding moment 
map components. Let $\D$ be the distribution on $Z$ given by
\[
\D = \text{span}\{X_{F_1},\dots,X_{F_{d-n-1}},\nabla F_1,\dots,\nabla F_{d-n-1}\},
\]
where the gradient is taken with respect to the Euclidean metric. Clearly $\D$ is a symplectic distribution 
and we can choose the basis of $\fk^\ast$ such that 
$$\{X_{F_1},\dots,X_{F_{d-n-1}},\nabla F_1,\dots,\nabla F_{d-n-1}\}$$
defines a symplectic basis of $\D$. Denote by $\D^\om$ the symplectic orthogonal of $\D$ and note that 
$d\pi|_{\D^\om}: \D^\om \to TW$ is a symplectic isomorphism at every point of $Z$.

Choose a symplectic frame $(u_1,\dots,u_{2n+2})$ of $\ga^*TW$ obtained via a global trivialization of 
$\Lambda_\C^n\xi$ (c.f. \cite[section 3]{AM2}).
Let $(\hat u_i)=(d\pi|_{\D^\om})^{-1}(u_i)$ be the frame of $\bga^*\D^\om$ given 
by the lift of $(u_i)$ and define the (ordered) frame $(v_1,\dots,v_{2d})$ of $\bga^*T\C^d$ as 
\[
(\hat u_1,\dots,\hat u_{2n+2},X_{F_1},\dots,X_{F_{d-n-1}},\nabla F_1,\dots,\nabla F_{d-n-1}).
\]
Denote by $\Phi$ the trivialization of $\bga^*T\C^d$ induced by $(v_i)$. Since $d\vr^H_t(X_{F_i})=X_{F_i}$ and 
$d\vr^H_t(\nabla F_i)=\nabla F_i$ for every $i \in \{1,\dots,d-n-1\}$ and $t\in \R$, we have, 
by construction, that
\[
\rs(\ga)=\rs(\bga;\Phi)
\]
where the index on the left hand side is computed using the frame $(u_i)$.

Now, consider the symplectic frame $(w_i)$ of $\bga^*T\C^d$ given by $w_i(t)=\psi(t)(v_i(0))$ and let 
$\Psi$ be the trivialization of $\bga^*T\C^d$ induced by $(w_i)$. It is easy to see that
\[
\rs(\bga;\Psi)=\rs(\psi(t)^{-1}\circ \vr^H_t),
\]
where the index on the right hand side is computed using the constant trivialization of $T\C^d$. 
Therefore, we have to show that
\begin{equation}
\label{eq:indexes}
\rs(\bga;\Phi) = \rs(\bga;\Psi).
\end{equation}
To accomplish this equality, consider the extensions of the frames $(v_i)$ and $(w_i)$ to 
$\hga^*T\C^d$ given by
\[
v'_i(t+j/m)=dg^j(v_i(t))\ \text{and}\ w'_i(t+j/m)=dg^j(w_i(t))
\]
for every $t \in [0,1/m]$ and $j \in \{0,\dots,m-1\}$. Since $g^j\circ\pi=\pi$, $dg^j(X_{F_i})=X_{F_i}$ and 
$dg^j(\nabla F_i)=\nabla F_i$, we have that
\begin{equation}
\label{eq:triv v'}
(v'_1,\dots,v'_{2n+2},v'_{2n+3},\dots,v'_{2d})
\end{equation}
\[
=(\hat u'_1,\dots,\hat u'_{2n+2},X_{F_1},\dots,X_{F_{d-n-1}},\nabla F_1,\dots,\nabla F_{d-n-1}),
\]
where $(\hat u'_i)=(d\pi|_{\D^\om})^{-1}(u'_i)$ is the lift of the obvious extension $(u'_i)$ of the frame 
$(u_i)$ to $(\ga^m)^*TW$. It follows from this that
\begin{equation}
\label{eq:index Phi'}
\rs(\hga;\Phi')=\rs(\ga^m),
\end{equation}
where $\Phi'$ is the trivialization of $\hga^*T\C^d$ induced by $(v'_i)$ and the index on the right hand side 
is computed using the frame $(u'_i)$. We also have that
\begin{equation}
\label{eq:triv w'}
w'_i(t)=\psi'(t)(v_i(0))
\end{equation}
where $\psi': [0,1] \to \SU(d)$ is the extension of $\psi$ given by $\psi'(t+j/m)=g^j\circ\psi(t)$ for every 
$t \in [0,1/m]$ and $j \in \{0,\dots,m-1\}$. Note that $\psi'$ is a loop in $\SU(d)$ based at the identity 
and therefore,
\begin{equation}
\label{eq:index Psi'}
\rs(\hga;\Psi')=\rs(\hga)
\end{equation}
where $\Psi'$ is the trivialization of $\hga^*T\C^d$ induced by $(w'_i)$ and the index on the right hand side 
is computed using the constant trivialization of $T\C^d$. We have from \cite[Lemma 3.4]{AM1} that the 
right hand sides of \eqref{eq:index Phi'} and \eqref{eq:index Psi'} are equal. Hence, we arrive at
\begin{equation}
\label{eq:index Phi'=index Psi'}
\rs(\hga;\Phi') = \rs(\hga;\Psi').
\end{equation}

Consider the map $A': [0,1] \to \Sp(2d)$ uniquely defined by the property that $A'_t v'_i(t)=w'_i(t)$ for every 
$i$ and $t$. Using \eqref{eq:triv v'}, \eqref{eq:triv w'} and the fact that $g^j\circ\pi=\pi$, $dg^j(X_{F_i})=X_{F_i}$ 
and $dg^j(\nabla F_i)=\nabla F_i$ we conclude that
\[
v'_i(j/m)=w'_i(j/m)\ \text{for every}\ i \implies A'_{j/m}=\id
\]
for every $j \in \{0,\dots,m\}$. We have that
\begin{equation}
\label{eq:indexes hga}
\rs(\hga;\Phi') = \rs(\hga;\Psi') + 2\maslov(A')
\end{equation}
and
\begin{equation}
\label{eq:indexes bga}
\rs(\bga;\Phi) = \rs(\bga;\Psi) + 2\maslov(A)
\end{equation}
where $A=A'|_{[0,1/m]}$. We claim that
\begin{equation}
\label{eq:indexes A and A'}
\maslov(A')=m\maslov(A).
\end{equation}
Indeed,
\begin{align*}
g^jA_tv'_i(t) & = g^jw'_i(t) \\
& = w'_i(t+j/m) \\
& = A'_{t+j/m}v'_i(t+j/m) \\
& = A'_{t+j/m}g^jv'_i(t) \\
\end{align*}
for every $i$ and consequently
\begin{equation}
\label{eq:loops}
A'_{t+j/m} = g^jA_tg^{-j}
\end{equation}
for every $t \in [0,1/m]$ and $j \in \{0,\dots,m-1\}$. But the conjugation map $P \mapsto g^jPg^{-j}$ is clearly 
homotopic to the identity and therefore induces the identity map on $\pi_1(\Sp(2d))$. Thus, all the loops 
$t \in [0,1/m] \mapsto A'_{t+j/m}$, $j \in \{0,\dots,m-1\}$, are homotopic to the loop $t \in [0,1/m] \mapsto A_t$. 
Since $A'$ is a concatenation of these loops, we conclude the equality \eqref{eq:indexes A and A'}.

Finally, we deduce from \eqref{eq:index Phi'=index Psi'} and \eqref{eq:indexes hga} that $\maslov(A')=0$. 
Thus, \eqref{eq:indexes} follows immediately from \eqref{eq:indexes bga} and  \eqref{eq:indexes A and A'}, 
proving the proposition.
\end{proof}

As we will now explain, Proposition~\ref{prop:index} implies in particular that the explicit method to compute the
Conley-Zehnder index of any contractible non-degenerate closed toric Reeb orbit described in~\cite[section 5]{AM1}
also applies to non-contractible orbits.

Given a toric diagram $D = \conv (v_1, \ldots, v_d) \subset \R^n$ and corresponding Gorenstein toric contact manifold 
$(M_D, \xi_D)$, consider a toric Reeb vector field $R_\nu \in \Xx (M_D, \xi_D)$ determined by the normalized toric 
Reeb vector (cf. Proposition~\ref{prop:reeb})
\[
\nu = (v,1) \quad\text{with}\quad v = \sum_{j=1}^d a_j v_j\,,\ a_j\in\R^+\,,\ j=1, \ldots, d\,,\ \text{and}\ 
\sum_{j=1}^d a_j = 1\,.
\]
By a small abuse of notation, we will also write
\[
R_\nu = \sum_{j=1}^d a_j \nu_j \,,
\]
where $\nu_j = (v_j,1)$, $j=1,\ldots,n$, are the defining normals of the associated good moment
cone $C\subset\R^n$.
Making a small perturbation of $\nu$ if necessary, we can assume that
\[
\text{the $1$-parameter subgroup generated by $R_\nu$ is dense in $\T^{n+1}$,}
\]
which means that if $v = (r_1, \ldots,  r_n)$ then $1,r_1,\ldots,r_n$'s are $\Q$-independent.
This is equivalent to the corresponding toric contact form being non-degenerate. 
In fact, the toric Reeb flow of $R_\nu$ on $(M_D,\xi_D)$ has exactly $m$ simple closed
orbits $\gamma_1, \ldots,\gamma_m$, all non-degenerate, corresponding to the $m$ edges
$E_1,\ldots,E_m$ of the cone $C$, i.e. one non-degenerate closed simple toric $R_\nu$-orbit
for each $S^1$-orbit of the $\T^{n+1}$-action on $(M_D,\xi_D)$. Equivalently, there is
\emph{one non-degenerate closed simple toric $R_\nu$-orbit for each facet of the toric diagram $D$.}
Let $\gamma$ denote one of those non-degenerate closed simple toric $R_\nu$-orbits and assume 
without loss of generality that the vertices of the corresponding facet, necessarily a simplex, are 
$v_1, \ldots, v_n \in \Z^n$. Let $h\in\Z^n$ be such that
\[
\{ \nu_1 = (v_1,1), \ldots, \nu_n = (v_n,1), \eta = (h, 1)\} \ \text{is a $\Z$-basis of $\Z^{n+1}$.}
\]
Then $R_\nu$ can be uniquely written as
\[
R_\nu = \sum_{j=1}^n b_j \nu_j + b \eta\,,\ \text{with}\ b_1,\ldots, b_n\in \R\ \text{and}\ 
b = 1 - \sum_{j=1}^n b_j \ne 0\,,
\]
and for contractible $\gamma$, as shown in~\cite[section 5]{AM1}, the Conley-Zehnder index of $\gamma^{N}$, 
for any $N\in\N$, is given by
\begin{equation} \label{eq:CZindex0}
\mu_{\rm CZ} (\gamma^{N}) = 2 \left( \sum_{j=1}^n \left\lfloor N \frac{b_j}{|b|}\right\rfloor
 + N \frac{b}{|b|} \sum_{j=1}^d \teta_j \right) + n\,,
\end{equation}
where $\teta\in\Z^d$ is an integral lift of $\eta\in\Z^{n+1}$ under the map $\beta: \Z^{d} \to \Z^{n+1}$ defined
by~(\ref{eq:defBeta}). The value in the last coordinate of $\nu_1 = (v_1,1), \dots, \nu_d = (v_d,1)$ and $\eta = (h,1)$ implies that 
$\sum_{j} \teta_j  = 1$ and so
\begin{equation} \label{eq:CZindex}
\mu_{\rm CZ} (\gamma^{N}) = 2 \left( \sum_{j=1}^n \left\lfloor N \frac{b_j}{|b|}\right\rfloor
 + N \frac{b}{|b|} \right) + n\,.
\end{equation}
When $\gamma$ is not contractible $(\eta,1)\in\Z^{n+1}$ does not have an integral lift under the map $\beta$, but it does
have an integral lift modulo $\frac{1}{2\pi}\beta(K)\subset \Z^{n+1}$. In other words, there is $\teta\in\Z^d$ such that
\[
\beta (\teta) = \eta - \frac{\beta (g)}{2\pi}\,,\ \text{where $g\in K\cap SU(d)$ is characterized by~(\ref{def:g}).}
\]
It follows from Proposition~\ref{prop:index} that the index of $\gamma^N$ is then given by~(\ref{eq:CZindex0}) for this $\teta$.
Since $g\in SU(d)$ we still have that $\sum_{j} \teta_j  = 1$ and so formula~(\ref{eq:CZindex}) remains valid when $\gamma$ 
is not contractible.

\section{Contact Betti numbers of $S^\ast (L^3_p (q))$}
\label{s:examples1}

In this section we will use formula~(\ref{eq:CZindex}) for the Conley-Zehnder index to compute the contact Betti numbers of the 
unit cosphere bundle of $3$-dimensional lens spaces  $(L^3_p (q))$, which by Theorem~\ref{thm:lensDiag} are the Gorenstein
toric contact $5$-manifolds determined by toric diagrams of the form
\[
\textup{conv}(v_0 = (0,0), v_1=(1,0), v_2 = (q,p), v_3= (q+1,p))\subset\R^2 \quad
\textup{with $p,q\in\N$ coprime.}
\]

Let $E_1, E_2, E_3, E_4$ be the edges of the toric diagram with endpoints $\{v_0, v_1\}$, $\{v_0, v_2\}$, $\{v_1, v_3\}$, $\{v_2, v_3\}$.
Consider a normalized toric Reeb vector
$R_\varepsilon=(\varepsilon_1, \varepsilon_2, 1)$, with $0<\varepsilon_1, \varepsilon_2<<1$, and denote by $\gamma_j$ its simple 
closed Reeb orbit corresponding to $E_j$, $j=1,2,3,4$. By taking $\varepsilon_1,\varepsilon_2$ such that $1, \varepsilon_1,\varepsilon_2$
are $\Q$-independent, 
we know that the Reeb flow of $R_\varepsilon$ has exactly these four simple closed Reeb orbits and they are all non-degenerate.
Either by direct computation using~(\ref{eq:CZindex}) or by using~\cite[Theorem 1.6]{AM2}, we also know that by considering
$\varepsilon_1$ and $\varepsilon_2$ arbitrarily small we will have that $\mu_{CZ}(\gamma_1^N)$ and 
$\mu_{CZ}(\gamma_2^N)$ are arbitrarily large for any $N\in\N$. Hence, we just need to compute
$\mu_{CZ}(\gamma_3^N)$ and $\mu_{CZ}(\gamma_4^N)$ up to arbitrarily large $N\in\N$ and for arbitrarily small $0<\varepsilon_1,\varepsilon_2<<1$.

For $\gamma_3$, take a vector $\eta_3=(a_1, a_2,1)\in\Z^3$ such that $\{\nu_1 = (v_1,1), \nu_3 = (v_3,1), \eta_3\}$ is a $\ZZ$-basis. This condition is written as 
\[
1=\det(\nu_1, \nu_3, \eta_3)=(1-a_1) p+a_2 q.
\]
Solving the corresponding system, we find out that
\[
R_\varepsilon=b_1\nu_1+b_2\nu_3+b\eta_3
\]
where
\[
b_1=(1-p+a_2)\frac{b}{p} +\varepsilon_1-\frac{(q+1)\varepsilon_2}{p}\,,\ b_2=-a_2 \frac{b}{p} +\frac{\varepsilon_2}{p}
\quad\text{and}\quad b = p(1-\varepsilon_1) + q \varepsilon_2\,.
\]
Therefore, considering also $0<\varepsilon_2 <<\varepsilon_1$ so that $\varepsilon_2 / \varepsilon_1$ is arbitrarily small,
we can apply~(\ref{eq:CZindex}) to get that
\begin{align*}
\mu_{CZ}\left(\gamma_3^N\right) & =2+2\left(\left\lfloor \frac{N b_1}{b}\right\rfloor
+\left\lfloor \frac{Nb_2}{b}\right\rfloor+N \right)\\
&=2+2\left(\left\lfloor -N+\frac{N(1+a_2)}{p}+\varepsilon'\right\rfloor+\left\lfloor \frac{-Na_2}{p}+\varepsilon''\right\rfloor+N\right)\\
&=2+2\left(\left\lfloor \frac{N(1+a_2)}{p}\right\rfloor+\left\lfloor \frac{-Na_2}{p}\right\rfloor\right)
\end{align*}
up to an arbitrarily large $N$.

For $\gamma_4$, one can check easily that the vector $\eta_4=(q, p-1, 1)$ is such that $\{\nu_2 = (v_2,1), \nu_3 = (v_3,1), \eta_4\}$
is a $\ZZ$-basis. Solving the system we get
\[
R_\varepsilon=(-q+\varepsilon_1)\nu_2+\left(-(p-q-1+\varepsilon_1-\varepsilon_2)\right)\nu_3+(p-\varepsilon_2)\eta_4.
\]
Therefore, considering again $0<\varepsilon_2 <<\varepsilon_1$, we can apply~(\ref{eq:CZindex}) to get that
\[
\mu_{CZ}(\gamma_4^N)=2+2\left(\left\lfloor \frac{-Nq}{p}\right\rfloor+
\left\lfloor \frac{N(q+1)}{p}-\varepsilon\right\rfloor\right)
\]
up to an arbitrarily large $N$.

The fundamental group $\pi_1(S^\ast (L^3_p (q)))$ is isomorphic to $\ZZ^3/\mathcal N\cong \ZZ_p$ where 
\[
\mathcal N=\textup{span}_\ZZ\{\nu_0 = (v_0,1), \nu_1=(v_1,1), \nu_2 = (v_2,1), \nu_3 = (v_3,1)\}
\] 
and the isomorphism $\ZZ^3/\mathcal N\cong \ZZ_p$ 
is induced by projection on the second coordinate to $\ZZ$ composed with the projection $\ZZ\to \ZZ_p$, 
$m\mapsto \underline{m}$. 
Via this isomorphism the classes $[\gamma_3], [\gamma_4]$ in the fundamental group correspond respectively to 
$\eta_3+\mathcal N$, $\eta_4+\mathcal N$, which correspond to $\underline a_2\in \ZZ_p$ and $\underline{p-1}\in \ZZ_p$, 
respectively. Let $k\in \{1, \ldots, p-1\}$ and take $r\in \{1, \ldots, p-1\}$ such that $r\equiv kq \mod p$. Since 
$(1-a_1)p+a_2q=1$, 
$q\equiv a_2^{-1}\mod p$, hence $ra_2\equiv k\mod p$. Thus $[\gamma_3^{mp+r}]=[\gamma_4^{(m+1)p-k}]$, for 
$m\in \ZZ_{\geq 0}$, are both the class corresponding to $\underline k\in \ZZ_p\cong\pi_1(M)$. We can compute
\[
\mu_{CZ}\left(\gamma_3^{mp+r}\right)=2+2m+ 2\left( \left\lfloor \frac{k+r}{p} \right\rfloor+\left\lfloor -\frac{k}{p} 
\right\rfloor \right)=\begin{cases}2m &\textup{if }k+r<p\\
2m+2&\textup{if }k+r\geq p\end{cases}
\]
and similarly
\[
\mu_{CZ}\left(\gamma_4^{(m+1)p-k}\right)=4+2m+ 2\left( \left\lfloor \frac{r}{p} \right\rfloor+\left\lfloor 
-\frac{r+k}{p}-\varepsilon \right\rfloor \right) =\begin{cases}2m+2 &\textup{if }k+r<p\\
2m&\textup{if }k+r\geq p\end{cases}
\]
up to an arbitrarily large $m$. Since for contact $5$-manifolds, i.e. $n=2$, the contact homology degree is equal to the
Conley-Zehnder index, for any non-trivial class $\underline k\in (\ZZ_p)\setminus \{0\}\cong \pi_1(S^\ast (L^3_p (q)))\setminus \{0\}$ 
we get that
\[
cb_\ast^{\underline k}(S^\ast (L^3_p (q)))=\begin{cases}1 &\textup{if }\ast=0\\
2 &\textup{if }\ast \geq 2\textup{ is even}\\
0 & \textup{otherwise}\end{cases}
\]
For the trivial class we note that $\deg\left(\gamma_3^{mp}\right)=2+2m$ and $\deg\left(\gamma_4^{mp}\right)=2m$, up 
to an arbitrarily large $m\in \ZZ_{>0}$, and so
\[
cb_\ast^{\underline{0}}(S^\ast (L^3_p (q)))=\begin{cases}1 &\textup{if } \ast=2\\
2 &\textup{if }  \ast> 2\textup{ is even}\\
0 & \textup{otherwise}\end{cases}.
\]
Hence, we get the following contact Betti numbers:
\[
cb_\ast (S^\ast (L^3_p (q)))=\begin{cases}p-1 &\textup{if } \ast=0\\
2p-1 &\textup{if } \ast=2\textup{ is even}\\
2p &\textup{if } \ast\geq 2\textup{ is even}\\
0 & \textup{otherwise}\end{cases}.
\]
Note that these contact Betti numbers do not detect the value of $q$ and do not distinguish any non-trivial class in 
$\pi_1 (S^\ast (L^3_p (q)))$. 

\section{Contact Betti numbers of higher dimensional lens spaces and applications}
\label{s:examples2}

In this section we will use formula~(\ref{eq:CZindex}) for the Conley-Zehnder index to compute the contact Betti numbers 
$cb_\ast$ of Gorenstein contact lens spaces
\[
L^{2n+1}_p (\alpha_0, -\alpha_1, \ldots, -\alpha_{n-1}, 1)
\]
with $n,p\in\N$, $\alpha_1, \ldots, \alpha_{n-1} \in \Z$, $\alpha_0 := \alpha_1 + \cdots + \alpha_{n-1} - 1$ and
$\gcd (\alpha_j , p) = 1$ for $j=0,\ldots, n-1$. 
As illustrative contact topology applications, we will then prove Propositions~\ref{prop:ineq} and~\ref{prop:auto}.
To simplify notation, we will denote these lens spaces by $L^{2n+1}_p (\balpha)$, with 
$\balpha = (-\alpha_1, \ldots, -\alpha_{n-1})\in\Z^{n-1}$, and their contact Betti numbers by $cb_\ast (n,p,\balpha)$ and
$cb_\ast^{\underline{k}} (n,p,\balpha)$, where $\underline{k}\in\Z_p \cong \pi_1 (L^{2n+1}_p (\balpha))$.

\subsection{Contact Betti numbers}

As we saw in \S~\ref{ss:higherlens}, the moment cone $C\subset\R^{n+1}$  of $L^{2n+1}_p (\balpha)$ has defining normals
\[
\nu_0=\begin{bmatrix}
0\\ 
0\\
\vdots\\ 
0\\ 
0\\ 
1
\end{bmatrix},\,\,
 \nu_j=\begin{bmatrix}
0\\ 
\vdots\\ 
1\\ 
0\\
\vdots\\
1
\end{bmatrix}\textup{ for }j=1, \ldots, n-1,\,\, \nu_n=\begin{bmatrix}
\alpha_1\\ 
\alpha_2\\
\vdots\\ 
\alpha_{n-1}\\ 
p\\ 
1
\end{bmatrix}\,.
\]
Let $E_j$ denote the edge of $C$ given by the intersection of all its facets except the one with normal $\nu_j$.
Consider a normalized toric Reeb vector $R_\varepsilon=(\varepsilon_1, \ldots, \varepsilon_{n-1}, \varepsilon_n, 1)$, 
with $0<\varepsilon_1, \ldots, \varepsilon_n<<1$, and denote by $\gamma_j$ its simple 
closed Reeb orbit corresponding to $E_j$, $j=0,\ldots,n$. By taking $\varepsilon_1,\ldots,\varepsilon_n$ such that
 $1,\varepsilon_1,\ldots,\varepsilon_n$ are $\Q$-independent, 
we know that the Reeb flow of $R_\varepsilon$ has exactly these $n+1$ simple closed Reeb orbits and they are all non-degenerate.
Again, either by direct computation using~(\ref{eq:CZindex}) or by using~\cite[Theorem 1.6]{AM2}, we also know that by considering
$\varepsilon_1,\ldots\varepsilon_n$ arbitrarily small we will have that $\mu_{CZ}(\gamma_j^N)$, $j=1,\ldots,n$, are arbitrarily large for 
any $N\in\N$. Hence, we just need to compute $\mu_{CZ}(\gamma_0^N)$ up to arbitrarily large $N\in\N$ and for arbitrarily small $0<\varepsilon_1,\ldots, \varepsilon_n<<1$.

Take a vector $\eta_0=(a_1, \ldots, a_{n-1}, a_n, 1)\in\Z^{n+1}$ that completes $\nu_1, \ldots, \nu_{n}$ to a $\ZZ^{n+1}$-basis, 
that is, such that the following equation holds:
\begin{align*}
1&=\begin{vmatrix}
1 & 0 &    & 0 & \alpha_1 & a_1\\ 
0 & 1 & \ldots  & 0 & \alpha_2 &a_2 \\ 
\vdots & \vdots &    & \vdots & \vdots &\vdots \\  
0 & 0 &  &   1 & \alpha_{n-1} & a_{n-1}\\ 
0 & 0 &  \ldots  & 0 & p & a_n\\ 
1 & 1 &    & 1 & 1 &1 
\end{vmatrix} \\
& =
p \begin{vmatrix}
1 & 0 &    & 0 & a_1 \\ 
0 & 1 & \ldots  & 0 & a_2 \\ 
\vdots & \vdots &    & \vdots & \vdots  \\  
0 & 0 & \ldots &   1 & a_{n-1} \\ 
1 & 1 &    & 1 & 1 
\end{vmatrix}
-
a_n\begin{vmatrix}
1 & 0 &    & 0 & \alpha_1 \\ 
0 & 1 & \ldots  & 0 & \alpha_2 \\ 
\vdots & \vdots &    & \vdots & \vdots  \\  
0 & 0 & \ldots &   1 & \alpha_{n-1} \\ 
1 & 1 &    & 1 & 1
\end{vmatrix}\\
&=-p ( 1 - a_1-\ldots-a_{n-1})-a_n(1-\alpha_1-\ldots-\alpha_{n-1})\\
&=\left(1-\sum_{j=1}^{n-1}a_j\right)p+\alpha_0 a_n\,.
\end{align*}
Thus we can take $a_n\equiv \alpha_0^{-1} \mod p$ and $a_j$, $j=1,\ldots, n-1$, in a way that the above equality holds.

We now want to write $R_\varepsilon$ in the basis $\{ \nu_1, \ldots, \nu_{n}, \eta_0\}$, i.e. we want to find $b_j$,
$j=1, \ldots, n$, and $b$ such that 
\[
R_\varepsilon = b_1\nu_1+\ldots+b_n\nu_n + b\eta_0\,.
\]
We get the following system
\[
\begin{cases}
\varepsilon_j=b_j+\alpha_j b_n+a_j b & j=1, \ldots, n-1\\ 
\varepsilon_n=p b_n+a_n b\\ 
\sum_{j=1}^{n} b_j + b = 1\,.
\end{cases}
\]
The solution for this system is given by
\[
b_n =-\frac{b a_n}{p}+\frac{\varepsilon_n}{p}\,,\ \ 
b_j =\frac{ \alpha_j a_n b}{p}-a_j b +\varepsilon_j-\frac{\alpha_j\varepsilon_n}{p}\,, 1\leq j \leq n-1\,, 
\]
\[
\textup{ and }\  \ b = p \left(1-\sum_{j=1}^{n-1} \varepsilon_j \right) + \alpha_0 \varepsilon_n.
\]
Therefore, considering also $0<\varepsilon_n <<\varepsilon_j$ so that $\varepsilon_n / \varepsilon_j$ is arbitrarily small,
for all $j=1,\ldots, n-1$, we can apply~(\ref{eq:CZindex}) to get that
\begin{align*}
\mu_{CZ}\left(\gamma_0^N\right) 
& = n + 2\left( \sum_{j=1}^{n-1} \left\lfloor \frac{N b_j}{b}\right\rfloor +\left\lfloor \frac{Nb_n}{b}\right\rfloor+N \right)\\
& = n + 2\left( \sum_{j=1}^{n-1} \left\lfloor{\frac{N\alpha_ja_n}{p}-Na_j} + \varepsilon'_j \right \rfloor
+ \left \lfloor{-\frac{Na_n}{p}}  + \varepsilon'_n \right \rfloor+N \right)\\
& = n + 2\left(\sum_{j=1}^{n-1}\left \lfloor{\frac{N\alpha_ja_n}{p}}\right \rfloor+\left \lfloor{-\frac{Na_n}{p}}\right \rfloor
+N\left(1-\sum_{j=1}^{n-1}a_j\right)\right)\\
& = n + 2\left(\sum_{j=1}^{n-1}\left \lfloor{\frac{N\alpha_ja_n}{p}}\right \rfloor+\left \lfloor{-\frac{Na_n}{p}}\right \rfloor
+N\left(\frac{1-\alpha_0a_n}{p}\right) \right)\\
&= n + 2\left( \frac{N}{p}-\sum_{j=1}^{n-1}\left\{ \frac{N\alpha_ja_n}{p}\right \}-\left \{-\frac{Na_n}{p}\right \}\right)
\end{align*}
up to an arbitrarily large $N$. Recall that $\{x\} = x - \lfloor x \rfloor$ for any $x\in\R$.

Let us define the (SFT) degree function $g: \N \to \Z$ by
\begin{equation} \label{eq:deg}
g(N)\equiv \mu_{CZ}(\gamma_0^N)+n-2 =
2\left( \frac{N}{p} + (n-1) -\sum_{j=1}^{n-1}\left\{ \frac{N\alpha_ja_n}{p}\right \}-\left \{-\frac{Na_n}{p}\right \}\right)\,.
\end{equation}
We can observe the following:
\begin{itemize}
\item $g(N)\geq 0$ for every $N>0$;
\item $g(p)=2n$;
\item $g(N)\leq 2(n-1)$ for $N<p$;
\item $g(N+p)=g(N)+2$ for every $N>0$.
\end{itemize}
By the last property, it is clear that
$$cb_{2j} (n,p,\balpha) =\#\left\{N\in \{1, \ldots, p\}: g(N)\leq 2j\right\}$$
since the congruence class $\{kp+N: k\in \ZZ^{+}_0\}$ contributes to the homology of degree $2j$ with $1$ if $g(N)\leq 2j$ 
and with $0$ otherwise. In particular
$$cb_{2(n-1)}  (n,p,\balpha) =p-1 \quad\textup{ and }\quad cb_{2j} (n,p,\balpha) =p \textup{ if } j \geq n\,.$$
Note also that, since $\left\{x\right\}+\left\{-x\right\}=1$ if $x\notin \ZZ$, we have $g(N)+g(p-N)=2(n-1)$ for $N=1, \ldots, p-1$. 
Thus 
$$g(N)\leq 2j \quad\textup{ if and only if} \quad g(p-N)>2(n-2-j)$$
and the following symmetry property follows:
$$cb_{2j}  (n,p,\balpha) +cb_{2(n-2-j)}  (n,p,\balpha) =p-1 \quad\textup{ for }j=0,1, \ldots, n-2.$$
For information on the decomposition induced by homotopy classes in the cyclic fundamental group of these Gorenstein
contact lens spaces, note that $\gamma_0$ represents a generator and we get that
\[
cb_{2j}^{[\gamma_0^N]} (n,p,\balpha) =\begin{cases}1 & \textup{if 2}j\geq \widetilde g(N) \\
0 &\textup{otherwise,}\end{cases}
\]
where $\widetilde{g}: \N \to \Z$ is defined by
\begin{equation} \label{eq:degp}
\widetilde{g} (N)=g(N) \quad\textup{if } N=1,\ldots,p\,, \quad\textup{and}\quad \widetilde{g} (N+p) = \widetilde{g} (N) 
\quad\textup{for all } N\in\N\,.
\end{equation}
We summarize these facts in the following proposition.
\begin{prop} \label{prop:lensBetti}
We have that $cb_j (n,p,\balpha) = 0$ whenever $j$ is odd or $j<0$ and
\[
cb_{2j} (n,p,\balpha) = \#\left\{N\in \{1, \ldots, p\}: g(N)\leq 2j\right\} \ \text{for all $j\geq 0$,}
\]
where $g$ is the degree function defined by~(\ref{eq:deg}).
In particular:
\begin{itemize}
\item[(i)] $cb_{2j} (n,p,\balpha) = p$ for all $j\geq n$;
\item[(ii)] $cb_{2(n-1)} (n,p,\balpha) = p-1$;
\item[(iii)] $cb_{2j} (n,p,\balpha) + cb_{2(n-2-j)} (n,p,\balpha) = p-1$ for $j=0,1,\ldots, n-2$.
\end{itemize}
Moreover,
\[
cb_{2j}^{[\gamma_0^N]} (n,p,\balpha) =\begin{cases}1 & \textup{if 2}j\geq \widetilde g(N) \\
0 &\textup{otherwise,}\end{cases}
\]
where $\widetilde{g}$ is defined by~(\ref{eq:degp}). 
\end{prop}
When the decomposition induced by homotopy classes in the cyclic fundamental group is not relevant,
the following definition provides a useful compact way of presenting the relevant information provided by the 
contact Betti numbers of a Gorenstein contact lens space.
\begin{defn} \label{def:cbs}
The \emph{contact Betti numbers sequence} of a Gorenstein contact lens space $L^{2n+1}_p (\balpha)$ is defined as
\[
cbs (n,p,\balpha) := \left( cb_0 (n,p,\balpha), cb_2 (n,p,\balpha), \ldots, 
cb_{2(n-1)} (n,p,\balpha) \right) \in \N_0^{n}\,.
\]
\end{defn}

\begin{example}
When $n=2$ we have that $p$ must be odd and
\[
cbs (2,p,\balpha)  = \left(\frac{p-1}{2}, p-1\right)\,,
\]
i.e. for $5$-dimensional Gorenstein contact lens spaces $L^{5}_p (\balpha)$ the contact Betti numbers sequence is independent 
of the weight $\balpha = (\alpha_1)$.
\end{example}

\begin{example}
To see that when $n\geq 3$ the contact Betti numbers sequence does depend on $\balpha$, let us compute it for
$L^7_5 (-3,1,1,1)$ and $L^7_5 (1,-1,-1,1)$.

For $L^7_5 (-3,1,1,1)$ we have that $n=3$, $p=5$, $\alpha_1=\alpha_2=-1$. Then $\alpha_0=-3$ and we can take 
$a_n=3\equiv_5 (-3)^{-1}$ in~(\ref{eq:deg}) to get that the degree function is given by 
\[
g(N)=2\left(\frac{N}{5} + 2  - 3\left\{ \frac{-3N}{5}\right\} \right)\,.
\]
This implies that $g(1)=g(4)=2$, $g(2)=0$, $g(3)=4$ and $g(5)=6$. It follows that the contact Betti numbers sequence is
\[
cbs (3,5,(-1,-1))  = \left(1,3,4\right)\,.
\]

For $L^7_5 (1,-1,-1,1)$ we have that $n=3$, $p=5$, $\alpha_1=\alpha_2=1$. Then $\alpha_0=1$ and we can take 
$a_n=1\equiv_5 1^{-1}$ in~(\ref{eq:deg}) to get that the degree function is given by 
\[
g(N)=2\left(\frac{N}{5} + 2  - 2\left\{ \frac{N}{5} \right\} - \left\{ -\frac{N}{5} \right\} \right)\,.
\]
This implies that $g(1)=g(2)=g(3)=g(4)=2$ and $g(5)=6$. It follows that the contact Betti numbers sequence is
\[
cbs (3,5,(1,1))  = \left(0,4,4\right)\,.
\]
\end{example}

\begin{example} \label{ex:preq}
When $n+1 = k p$ for some $k\in\N$, the contact lens space 
$$L_p^{2n+1}(1,\ldots, 1)$$ 
is Gorenstein. Indeed, we can take $\alpha_j=-1$, 
$j=1, \ldots, n-1$, $\alpha_0 = -n =1-(n+1)\equiv_{p} 1$ and $a_n=1$. The degree function~(\ref{eq:deg}) is given by
\[
g(N)=2\left(\frac{N}{p}+(n-1)-n\left\{-\frac{N}{p}\right\}\right)\,.
\]
When $N\in \{1,2,\ldots, p-1\}$ we get 
\[
g(N)=2\left(\frac{N}{p}+(n-1)-n\left(1-\frac{N}{p}\right)\right)=2\left(\frac{N(n+1)}{p}-1\right)=2(kN-1)\,.
\]
It follows that 
\[
cb_{2\ast} (kp-1, p, (-1,\ldots,-1)) = j 
\]
whenever $j=1,\ldots,p-1$ and $kj-1\leq \ast < k (j+1) -1$.
For example, this implies that the contact Betti numbers sequence of Gorenstein real projective spaces, i.e. 
$\rp^{2n+1}$  with $n$ odd, is
\[
cbs (n, 2, (-1,\ldots,-1)) = (0,\ldots,0,1, \ldots, 1)\ \text{with $(n-1)/2$ zeros and $(n+1)/2$ ones.} 
\]
It also implies that the contact Betti numbers sequence of the prequantization of $(\cp^n, [\omega] = 2\pi c_1 (\cp^n))$, i.e.
$L_p^{2n+1}(1,\ldots, 1)$ with $p=n+1$ (hence $k=1$), is
\[
cbs (n, n+1, (-1,\ldots,-1)) = (1,2, \ldots, n)\,.
\]
\end{example}

\subsection{Applications}

We will now prove Propositions~\ref{prop:ineq} and~\ref{prop:auto}.

To prove Proposition~\ref{prop:ineq}, we will show that the Gorenstein contact lens spaces 
\[
L^{13}_5 (1,1,1,1,2,-2,1)\quad\text{and}\quad L^{13}_5 (1,-1,-1,-1,-2,-2,1)\,,
\]
which are diffeomorphic as naturally oriented manifolds (they have the same weights up to an even number of sign changes), 
have different contact Betti numbers sequences. 

For $L^{13}_5 (1,1,1,1,2,-2,1)$ 
we have that $n=6$, $p=5$, $\balpha=(-1,-1,-1,-2,2)$ and  $\alpha_0=-4 \equiv_5 1$. We 
can take $a_6=1\equiv_5 1^{-1}$ in~(\ref{eq:deg}) to get that the degree function is given by 
\[
g(N)=2\left(\frac{N}{5} + 5  - 4\left\{ \frac{-N}{5}\right\}  -  \left\{ \frac{-2N}{5}\right\} -  \left\{ \frac{2N}{5}\right\}  \right)\,.
\]
This implies that $g(1)=2$, $g(2)=4$, $g(3)=6$,  $g(4)=8$ and $g(5)=12$. It follows that the contact Betti numbers sequence is
\[
cbs (6,5,(-1,-1,-1,-2,2))  = \left(0,1,2,3,4,4\right)\,.
\]
For $L^{13}_5 (1,-1,-1,-1,-2,-2,1)$ we have that $n=6$, $p=5$, $\balpha=(1,1,1,2,2)$ and  $\alpha_0=6 \equiv_5 1$. We 
can take $a_6=1\equiv_5 1^{-1}$ in~(\ref{eq:deg}) to get that the degree function is given by 
\[
g(N)=2\left(\frac{N}{5} + 5  - 3\left\{ \frac{N}{5}\right\}  - 2 \left\{ \frac{2N}{5}\right\} -  \left\{ \frac{-N}{5}\right\}  \right)\,.
\]
This implies that $g(1)=g(3) = 6$, $g(2)=g(4)=4$ and $g(5)=12$. It follows that the contact Betti numbers sequence is
\[
cbs (6,5,(1,1,1,2,2))  = \left(0,0,2,4,4,4\right)\,.
\]

To prove Proposition~\ref{prop:auto}, we will use the decomposition of the contact Betti numbers sequence by homotopy 
classes in the fundamental group to show that 
$$L^{15}_5 (1,1,1,2,-2,-2,-2,1)$$ 
is an example of a Gorenstein contact lens space 
with an homotopy class of orientation preserving diffeomorphisms that does not contain any contactomorphism. Before
analysing this particular example in detail, let us make some more general considerations.

Let $L_p$ denote a lens space with $\pi_1(L_p)\cong \ZZ_p$. Any diffeomorphism $f: L_p\to L_p$ induces an automorphism 
$f_\ast:\ZZ_p\to\ZZ_p$. Such an automorphism must be multiplication by $k$ for some $k\in (\ZZ_p)^\ast$. 
It follows from the last part of Proposition~\ref{prop:lensBetti} that if such an automorphism is induced by a co-orientation 
preserving contactomorphism of $L_p$ then $\widetilde g(kN)=\widetilde g(N)$ for every $N\in\N$, where $\widetilde{g}$ is 
defined by~(\ref{eq:degp}). This is a restrictive condition. As the examples above and in the previous subsection illustrate 
well, it often implies that $k=1$. However, it is also somewhat restrictive to find lens spaces $L_p$ with non-trivial 
automorphisms of $\ZZ_p$ induced by orientation preserving diffeomorphisms (see~\cite{M} and~\cite{HJ}). In fact, for $p$ 
odd, the automorphisms induced by orientation preserving homotopy equivalences correspond to multiplication by $k\in (\ZZ_p)^\ast$ 
such that $k^{n+1} \equiv 1 \mod p$, where $\dim L_p = 2n+1$. To characterize the homotopy equivalences that can be 
represented by diffeomorphisms, let $\ell_0, \ell _1, \ldots, \ell_n \in\Z$ be the weights of $L_p$ and consider
 $\Delta(L_p)\in \QQ[\ZZ_p]$ defined by 
\[
\Delta(L_p) (t) =\prod_{j=0}^n \left(t^{r_j}-1\right)\in \QQ[\ZZ_p]\,,
\]
where $t$ is a (multiplicative) generator of $\ZZ_p$ and $r_j \ell_j \equiv 1 \mod p$.
Any automorphism of $\ZZ_p$ extends to an automorphism of $\QQ[\ZZ_p]$ and the class $[f]$ of homotopy equivalences can 
be represented by an orientation preserving diffeomorphism if and only if $f_\ast \Delta(L_p)= t^u \Delta(L_p)$ for some 
$u\in \ZZ$ (see~\cite[Proposition 3.3]{HJ}).

Consider now the Gorenstein contact lens space $L^{15}_5 (1,1,1,2,-2,-2,-2,1)$. We have that $n=7$, $p=5$, 
$\balpha=(-1,-1,-2,2,2,2)$ and  $\alpha_0=1$. We can take $a_7=1\equiv_5 1^{-1}$ in~(\ref{eq:deg}) to get that the degree 
function is given by 
\[
g(N)=2\left(\frac{N}{5} + 6  - 3\left\{ \frac{2N}{5}\right\}  - 3 \left\{ \frac{-N}{5}\right\} -  \left\{ \frac{-2N}{5}\right\}  \right)\,.
\]
This implies that $g(1)=g(2) = 4$, $g(3)=g(4)=8$ and $g(5)=14$, which means that there is no $1 \ne k\in (\ZZ_5)^\ast$ such
that $\widetilde g(kN)=\widetilde g(N)$ for every $N\in\N$. Hence, any co-orientation preserving contactomorphism of
$L^{15}_5 (1,1,1,,2,-2,-2,-2,1)$ acts trivially on its fundamental group. Since the contactomorphism induced by complex
conjugation on all coordinates reverses the co-orientation and acts by $-1$ on the fundamental group, we conclude that
any contactomorphism acts by multiplication by $\pm 1 \in (\ZZ_5)^\ast$. However, this lens space admits orientation 
preserving diffeomorphisms 
that act by multiplication by $2\in (\ZZ_5)^\ast$ on its fundamental group. In fact note that $2^8 = 256 \equiv_5 1$, while
\[
\Delta (t) = (t-1)^4 (t^3 -1) (t^{-3} -1)^3 = \frac{(-1)^3}{t^4} (t-1)^4 (t^3 -1)^4
\]
and
\[
\Delta (t^2) = (t^2-1)^4 (t -1) (t^{-1} -1)^3 = (-1)^7 (t-1)^4 (t^3 -1)^4 = t^4 \Delta (t) \,.
\]
An explicit example of such a diffeomorphism is the one induced by the following linear automorphism of $\C^8$:
\[
(z_0, z_1, z_2, z_3, z_4, z_5, z_6, z_7) \mapsto (\bar z_3, z_4, z_5, z_7, \bar z_0, \bar z_1, \bar z_2, z_6)\,.
\]

\appendix

\section{Total Chern class of lens spaces} 
\label{appendix}

The following proposition is elementary and certainly well known, although we could not find an explicit
reference to it in the literature.
\begin{proposition} \label{prop:totalChern}
Under natural isomorphisms $H^{2j} (L^{2n+1}_p (\ell_0, \ell _1, \ldots, \ell_n); \Z) \cong \Z_p$, for $j=1,\ldots,n$, 
the Chern classes of the canonical contact structure $\xi$ on 
$$L^{2n+1}_p (\ell_0, \ell _1, \ldots, \ell_n)$$ 
are given by
\[
c_j = \sigma_j \!\!\!\!  \mod p\,, j=1,\ldots n\,,
\]
where $\sigma_j =$ $j$-th elementary symmetric polynomial of 
$\ell_0, \ell _1, \ldots, \ell_n$.
\end{proposition}
\begin{proof}
This is a particular case of Proposition 2.16 in~\cite{AM1} and its proof, together with the fact that
the $j$-th Chern class of a sum of line bundles is the $j$-th elementary symmetric polynomial of
their first Chern classes. The main points are the following:
\begin{itemize}
\item[(i)] The quotient map from $S^{2n+1}$ to $L_p^{2n+1} (\ell_0, \ell_1, \ldots, \ell_n)$ is a principal $\Z_p$-bundle 
and its classifying map $f: L_p^{2n+1} (\ell_0, \ell_1, \ldots, \ell_n) \to B \Z_p$ induces an isomorphism 
$f^\ast : H^2 (B \Z_p; \Z) \to H^2 (L_p^{2n+1} (\ell_0, \ell_1, \ldots, \ell_n); \Z)$.
\item[(ii)] The $\Z_p$-action on $\C^{n+1}$ gives rise to an associated vector bundle $S^{2n+1} \times_{\Z_p} \C^{n+1}$ 
over $L_p^{2n+1} (\ell_0, \ell_1, \ldots, \ell_n)$, and
\[
c_1 (\xi) = c_1 (S^{2n+1} \times_{\Z_p} \C^{n+1}) = f^\ast c_1 (E\Z_p \times_{\Z_p}  \C^{n+1}).
\]
\item[(iii)] $H^2 (B \Z_p; \Z) \cong \Z_p \cong$ character group of $\Z_p$ and $c_1 (E\Z_p \times_{\Z_p}  \C^{n+1})$ 
is the sum of the characters that determine the $\Z_p$-action on $\C^{n+1}$.
\end{itemize}
\end{proof}

Consider the polynomial $c(\xi) \in \Z_p [x]$ given by
\[
c(\xi) = \prod_{i=0}^n \left( 1 + \ell_i x \right) = 1 + \prod_{i=1}^{n+1} \sigma_i x^i \in \Z_p [x]\,.
\]
The total Chern class of $\xi$ is given by $[c(\xi)] \in \Z_p [x]/x^{n+1}$ and Proposition~\ref{prop:totalChern} says that $\xi$ 
has total Chern class equal to zero if and only if we have that
\[
\prod_{i=0}^n \left(1 + \ell_i x \right)  = 1 + \sigma_{n+1} x^{n+1} = 1 + \left(\prod_{i=0}^n \ell_i\right) x^{n+1} \in \Z_p [x]\,.
\]
This equation is quite restrictive but it does have some solutions that provide a few interesting examples of lens spaces
with canonical contact structure with zero total Chern class. For lens spaces of the form
\[
L_p^{2n+1} (1, 1, \ldots, 1)\,,\ \text{i.e.} \ \ell_0 = \ell_1 = \cdots = \ell_n = 1\,,
\]
we have that 
\[
[c(\xi)] = 0 \Leftrightarrow (1+x)^{n+1} = 1 + x^{n+1} \in \Z_p [x] \Leftrightarrow n+1 = p^k \ \text{for $p$ prime and some $k\in\N$.} 
\]
Hence, any lens space of the form $L_p^{2p^k -1} (1, 1, \ldots, 1)$, with $p$ prime, has zero total Chern class. Another set 
of examples arises from the fact that, for prime $p$, we have
\[
1 - x^{p-1} = \prod_{i=0}^{p-2} (1 + (i+1) x) \in \Z_p [x]\,.
\]
Hence, any lens space of the form $L_p^{2p-3} (1, 2, \ldots, p-1)$, with $p$ prime, also has zero total Chern class. 

Despite these and some other few examples of lens spaces with canonical contact structure with zero total Chern class, 
they do not provide relevant examples for the type of results considered in Propositions~\ref{prop:ineq} and~\ref{prop:auto}.
In fact, at least for prime $p$, there are no examples of lens spaces where the  type of results considered 
in Propositions~\ref{prop:ineq} and~\ref{prop:auto} do not follow from total Chern class considerations, the reason being
that, as we will now see, the canonical contact structure of a lens space $L^{2n+1}_p (\ell_0, \ell _1, \ldots, \ell_n)$  is 
completely determined by the diffeomorphism type of the lens space and the total Chern class of the contact structure.

\begin{proposition}
Let $p$ be a prime and $f$ be an orientation preserving diffeomorphism of $L_p=L_p^{2n+1}(\ell_0, \ldots, \ell_n)$. 
Suppose that $f_\ast: \pi_1(L_p)\to \pi_1(L_p)$ is multiplication by $k\in \ZZ_p^\ast$.
If $f$ preserves the total Chern class of the canonical contact structure of $L_p$, then there is a permutation $\tau$ of 
the set $\{0, 1, \ldots, n\}$ such that
\[\ell_{\tau(i)}\equiv k \ell_i \mod p.\]
 In particular,  $\tilde{g}$ defined by (\ref{eq:degp})  satisfies $\tilde g(kN)=\tilde g(N)$ for every $N$, so the contact Betti
 numbers and their decomposition by homotopy classes of the fundamental group do not impose any restriction for $f$ to 
 be homotopic to a contactomorphism of the canonical contact structure of $L_p$.
\end{proposition}
\begin{proof}
The condition that $f$ preserves the total Chern class is equivalent to $k^i \sigma_i \equiv \sigma_i \mod p$ for $i=1, \ldots, n$. 
Moreover, since $f$ is an orientation preserving diffeomorphism we also have $k^{n+1}\equiv 1 \mod p$. Hence the congruence 
above 
also holds for $i=n+1$. Therefore 
\[
\prod_{i=0}^n \left( 1 + \ell_i x \right) = 1 + \prod_{i=1}^{n+1} \sigma_i x^i  = 1 + \prod_{i=1}^{n+1} k^i \sigma_i x^i =
\prod_{i=0}^n \left( 1 + k \ell_i x \right) \,.
\]
The existence of the permutation $\tau$ follows from $\ZZ_p$ being a field.\qedhere
\end{proof}

\begin{proposition}
Let $p$ be a prime. Let $f$ be an orientation preserving diffeomorphism
\[f\colon L_p=L_p^{2n+1}(\ell_0, \ldots, \ell_n)\to L_p^{2n+1}(\ell_0', \ldots, \ell_n')=L_p'.\]
If $f$ preserves the total Chern class of the corresponding canonical contact structures $\xi$ and $\xi'$, 
i.e. $f^\ast [c(\xi')] = [c(\xi)]  \in \Z_p [x]/x^{n+1}$, then there is $k\in \ZZ_p^\ast$ and a permutation $\tau$ 
of the set $\{0, 1, \ldots, n\}$ such that
\[\ell_{\tau(i)}'\equiv k \ell_i \mod p.\]
\end{proposition}
\begin{proof}
Exactly the same as the proof of the previous proposition once we fix isomorphisms $\pi_1(L_p)\cong \ZZ_p \cong \pi_1(L_p').$
\end{proof}

\begin{cor}
Let $p$ be a prime. Let $f$ be an orientation preserving diffeomorphism
\[f\colon L_p=L_p^{2n+1}(\ell_0, \ldots, \ell_n)\to L_p^{2n+1}(\ell_0', \ldots, \ell_n')=L_p'.\]
Then $f$ is homotopic to a contactomorphism of the corresponding canonical contact structures $\xi$ and $\xi'$
if and only if $f^\ast [c(\xi')] = [c(\xi)]  \in \Z_p [x]/x^{n+1}$.
\end{cor}
\begin{proof}
This follows from the previous proposition together with the fact that diffeomorphisms of lens spaces are homotopic if and only if 
they induce the same map on the fundamental group (because inducing the same map on $\pi_1$ implies they induce the same 
map on cohomology due to its ring structure; see also Proposition 3.2 in~\cite{HJ}). 
\end{proof}

\section*{Acknowledgements} 
We are grateful to Renato Vianna for asking the first named author for a toric description of unit cosphere 
bundles of $3$-dimensional lens spaces. The answer is given in Theorem~\ref{thm:lensDiag} and provides
one of the two sets of examples discussed in detail in this paper. We are also grateful to one referee
for making us look at the total Chern class of the canonical contact structures on lens spaces and to Gustavo
Granja for helping us with that.

\end{document}